\tikzset{node distance=2cm, auto}
\DeclareMathOperator*{\spn}{span}
\DeclareMathOperator*{\clspn}{\overline{\spn}}
\theoremstyle{plain}
\newtheorem{theorem}{Theorem}[section]
\newtheorem{prop}[theorem]{Proposition}
\newtheorem{corollary}[theorem]{Corollary}
\newtheorem{lemma}[theorem]{Lemma}
\theoremstyle{definition}
\newtheorem{definition}[theorem]{Definition}
\newtheorem{remark}[theorem]{Remark}
\numberwithin{equation}{section}
\newcommand{\secref}[1]{Section~\textup{\ref{#1}}}
\newcommand{\theoremref}[1]{Theorem~\textup{\ref{#1}}}
\newcommand{\corref}[1]{Corollary~\textup{\ref{#1}}}
\newcommand{\propref}[1]{Proposition~\textup{\ref{#1}}}
\newcommand{\defnref}[1]{Definition~\textup{\ref{#1}}}
\newcommand{\remarkref}[1]{Remark~\textup{\ref{#1}}}
\newcommand{\la}{\langle}
\newcommand{\ra}{\rangle}
\newcommand{\<}{\langle}
\renewcommand{\>}{\rangle}
\newcommand{\pre}[1]{{}_{#1}}
\newcommand{\variso}{\xrightarrow{\simeq}}
\renewcommand{\)}{\textup)}
\newcommand{\xt}{\otimes}
\renewcommand{\ker}{\operatorname{Ker}}
\newcommand{\aut}{\operatorname{Aut}}
\newcommand{\id}{\operatorname{id}}
\newcommand{\ad}{\operatorname{Ad}}
\newcommand{\C}{$C^*$}
\newcommand{\AXB}{$\pre AX_B$}
\newcommand{\otss}{\otimes_B}
\newcommand{\otsc}{\otimes_C}
\newcommand{\X}{\widetilde{X}}
\newcommand{\wilde}{\widetilde}
\newcommand{\KK}{\mathcal K}
\newcommand{\LL}{\mathcal L}
\newcommand{\CC}{\mathbb C}
\newcommand{\csta}{\ensuremath{C^*}-algebra}
\newcommand{\xm}{\otimes_{\max}}
\newcommand{\cst}{\ensuremath{C^*}}
\newcommand{\od}{\odot}
\newcommand{\ibm}{imprimitivity bimodule}
\newcommand{\hbm}{Hilbert bimodule}
\newcommand{\righttext}[1]{\qquad\text{#1 }}
\newcommand{\midtext}[1]{\qquad\text{#1 }\qquad}
\title{Exact sequences in the enchilada category}
\author[Eryuzlu]{M. Ery\"uzl\"u}
\address{School of Mathematical and Statistical Sciences
\\Arizona State University
\\Tempe, Arizona 85287}
\email{meryuzlu@asu.edu}
\author[Kaliszewski]{S. Kaliszewski}
\address{School of Mathematical and Statistical Sciences
\\Arizona State University
\\Tempe, Arizona 85287}
\email{kaliszewski@asu.edu}
\author[Quigg]{John Quigg}
\address{School of Mathematical and Statistical Sciences
\\Arizona State University
\\Tempe, Arizona 85287}
\email{quigg@asu.edu}
\date{July~21 , 2020}
\subjclass[2010]{Primary 46L55; Secondary 18B99}
\keywords{short exact sequence, $C^*$-correspondence, exact functor, crossed product}
\begin{document}
\maketitle
\begin{abstract}
We define exact sequences in the enchilada category of $C^*$-algebras and correspondences, and
prove that 
the 
reduced-crossed-product functor
is 
not
exact
for the enchilada categories.
Our motivation was
to determine whether we can have a better understanding of the Baum-Connes conjecture by using enchilada categories.
Along the way we prove numerous results showing that the enchilada category is rather strange.
\end{abstract}

\section{Introduction} 

The Baum-Connes conjecture says (very roughly) that, given an action of a locally compact group on a $C^*$-algebra, the topological $K$-theory is naturally isomorphic to the $K$-theory of the reduced crossed product. Unfortunately, the conjecture is false in that form, essentially because the topological $K$-theory is an exact functor of actions, while the reduced crossed product is not.
Some effort has been expended to ``fix'' the Baum-Connes conjecture (see, e.g., \cite{bgwexact, bew, bew2, bewminimal, klqfunctor, klqfunctor2, klqtensoraction}).
In this paper we investigate another possible strategy of fixing the conjecture: change the categories.
All the work to date on the {Baum-Connes} conjecture has used categories of $C^*$-algebras, possibly with extra structure, where the morphisms are *-homomorphisms that preserve the structure.
Here we change the morphisms to be isomorphism classes of $C^*$-correspondences
--- we call these ``enchilada categories''.
Perhaps we should explain the genesis of
this unusual name: when the AMS~Memoir \cite{enchilada} was being prepared,
the authors decided to first introduce the general idea by writing a smaller paper~\cite{taco},
and we privately referred to these two papers as the ``big enchilada'' and the ``little taco'', respectively.
Since then, a few of us have been using the name ``enchilada category'' for the type of category studied in those two papers (see Section~\ref{enchilada} for definitions).

More precisely, we investigate the following question:
is the reduced crossed product functor exact between enchilada categories?
In order to study this question rigorously, we first need to know:
what are the exact sequences are in the enchilada category of $C^*$-algebras?
In this paper we give one answer to this latter question.
We then apply this to answer the exactness question;
unfortunately, the answer is no, the reduced crossed product is not exact for the enchilada categories.

Despite this failure of exactness, we believe that our investigation into exact sequences in the enchilada category will be useful.
It turns out that the enchilada category is quite strange, in the sense that the morphisms are not mappings; additionally, the category is not abelian, or even preadditive, so the standard techniques of homological algebra are largely unavailable.
As an illustration of our ignorance concerning the enchilada category, we have been unable to completely characterize either the monomorphisms or the epimorphisms.

This contributed to our most formidable hurdle: how to define the image of a morphism.
Since a morphism in the enchilada category is (the isomorphism class of) a $C^*$-correspondence,
it is fairly easy to guess that the image should involve the range of the inner product, and it is then a short step to imagine that the range of the correspondence should in fact be the closed span of the inner products.
But how to put this on a rigorous footing?
In abstract category theory, a common way to define image uses \emph{subobjects}, which in turn use monomorphisms; this gave us trouble due to our inability to characterize monomorphisms.
In some category-theory literature, the definition of subobject is modified by restricting the class of monomorphisms.
We first tried the \emph{strong monomorphisms} of \cite{arduini}.
This in turn lead to another stumbling block: our limited understanding of epimorphisms in the enchilada category eventually defeated us because the definition of strong monomorphism uses epimorphisms.
We then tried using \emph{split monomorphisms} in the definition of image.
This turned out to work very well, but it was unsatisfying since it seemed to depend upon the more-or-less arbitrary choice of split monomorphisms.
Fortunately, we found in \cite{schubert} 
an alternative notion of image, which we call \emph{Schubert image},
based upon kernels and cokernels.
Since we were able to prove that the enchilada category has kernels and cokernels,
and even more importantly that in the enchilada category every kernel is a split monomorphism
(see \propref{split mono is left-full} and \corref{left-full is kernel}),
we were happy to adopt Schubert's definition of image.

We begin in \secref{prelim} with a brief review of the basic notions from category theory that we will need.
Then in \secref{enchilada} we investigate these notions for the enchilada category,
where our main objective is to define \emph{kernel} and \emph{Schubert image}.
Once this is done, we characterize short exact sequences in the enchilada category (see \theoremref{short exact}).
It is then easy to explain why the reduced crossed product functor is not exact for the enchilada categories (see \remarkref{failure}).
Finally, in \secref{further} we exhibit a few other ways in which the enchilada category is strange ---
it is not abelian, or even preadditive.

\section{Preliminaries}\label{prelim}

This paper is written primarily for $C^*$-algebraists.
We use a nontrivial portion of the concepts of category theory, so for the convenience of the reader we recall the definitions and basic results here.
All abstract discussions of morphisms and objects will be tacitly in some category~$\mathcal{C}$.

\begin{definition}
A \emph{monomorphism} is a morphism $f$
such that $f\circ g=f\circ h$ implies $g=h$.
Dually,  $f$ is an \emph{epimorphism} if $g\circ f=h\circ f$ implies $g=h$.
\end{definition}

\begin{definition}
An object $A$ is \emph{initial} if for every object $B$ there is exactly one morphism $A\to B$.
Dually, $A$ is \emph{terminal} if for every object $B$ there is exactly one morphism $B\to A$. Both of them are unique up to isomorphism.
\end{definition}

\begin{definition}
A \emph{zero} 
is an object that is both initial and terminal.
If it exists, a zero is unique up to isomorphism, and is denoted by 0.
For any two objects 
$A,B$
the \emph{zero morphism} 
$0_{A, B} :A\to B$
is the unique morphism from $A$ to $B$ that factors through 0.
Frequently we just write 0 for $0_{A,B}$.
\end{definition}

\begin{definition}
Let $f,g:A\to B$.
An \emph{equalizer} of $f,g$ is a morphism $h:C\to A$ such that
\begin{itemize}
\item
$f\circ h=g\circ h$;

\item
whenever $k:D\to A$ satisfies $f\circ k=g\circ k$
there exists a unique morphism $p:D\to C$ such that $h\circ p=k$.
\end{itemize}
\end{definition}
The situation is illustrated by the following commutative diagram:
\[
\xymatrix{
D \ar[dr]^k \ar@{-->}[d]_p^{!}
\\
C \ar[r]_h
&A \ar@<2pt>[r]^f \ar@<-2pt>[r]_g
&B
}
\]

\begin{remark}
As usual with universal properties, an equalizer, if it exists, is unique up to (unique) isomorphism.
In this case this means not only that for any other equalizer $k:L\to A$ the unique morphism $u:L\to C$
making the diagram
\[
\xymatrix{
L \ar[dr]^k \ar@{-->}[d]_u
\\
C \ar[r]^h
&A
}
\]
commute is an isomorphism,
but that conversely for any isomorphism $u:L\to C$
the morphism $h\circ u$ is an equalizer of $f,g$.
We will omit explicitly making similar remarks regarding other categorical gadgets.
\end{remark}

\begin{definition}
\emph{Coequalizer} is the dual of equalizer,
i.e., a \emph{coequalizer} of $f,g:A\to B$ is a morphism $h:B\to C$ such that
\begin{itemize}
\item
$h\circ f=h\circ g$;

\item
whenever $k:B\to D$ satisfies $k\circ f=k\circ g$
there exists a unique morphism $p:C\to D$ such that $k=p\circ h$.
\end{itemize}
\end{definition}
This is illustrated by the commutative diagram
\[
\xymatrix{
&&D
\\
A \ar@<2pt>[r]^f \ar@<-2pt>[r]_g
&B \ar[r]_h \ar[ur]^k
&C \ar@{-->}[u]_p^{!}
}
\]

\begin{remark}
If 
$f,g:A\to B$ and $q:B\to E$ is a monomorphism,
then $f,g$ and $q\circ f,q\circ g$ have the same equalizers.
Dually,
if $q:E\to A$ is an epimorphism,
then $f,g$ and $f\circ q,g\circ q$ have the same coequalizers.
\end{remark}

\begin{definition}
Let $\CC$  be a category with zero object 0, and let $f:A\to B$.
A \emph{kernel} of $f$ is an equalizer of the pair $f,0$
equivalently,
a
morphism $h:C\to A$ such that 
\begin{itemize}
\item  
$f\circ h = 0$;

\item  
whenever
$k: D\to A$  satisfies $f\circ k =  0$ there exists a unique morphism $p: D\to C$ such that $h\circ p = k$. 
\end{itemize}
\end{definition}

\begin{definition}
\emph{Cokernel} is the dual of kernel,
i.e., a \emph{cokernel} of $f:A\to B$
is a
coequalizer of $f,0$;
equivalently, a
morphism $h : B\to C$ such that 
\begin{itemize}
\item  
$h\circ f = 0$

\item  
whenever
$k: B\to D$  satisfies $k\circ f = 0$ there exists a unique morphism $p: C\to D$ such that $p\circ h = k$. 
\end{itemize}
\end{definition}

\begin{remark}
Every equalizer, and hence every kernel, is a monomorphism, and by duality every coequalizer, and hence every cokernel, is an epimorphism.
If 
$f:A\to B$ and $q:B\to E$ is a monomorphism, then
$f$ and $q\circ f$ have the same kernels.
Dually, if 
$q:E\to A$ is an epimorphism, then 
$f$ and $f\circ q$ have the same cokernels.
\end{remark}

\begin{remark}
If 0 is a zero object,
then for
all objects $A,B$
the morphism 
$0:A\to B$
has kernel $1_A$ and cokernel $0:B\to 0$.
\end{remark}

\begin{definition}
A \emph{split monomorphism} is a morphism $f:A\to B$
such that there exists a morphism $g:B\to A$ with $g\circ f=1_A$,
and dually a \emph{split epimorphism} is a morphism $f:A\to B$ such that there exists a morphism $g:B\to A$ with $f\circ g=1_B$.
\end{definition}

In the category-theory literature, one can find various definitions of image and coimage. A common definition of the image of a morphism uses subobjects, and we summarize this approach:
if $f:B\to A$ and $g:C\to A$ are two monomorphisms with common codomain $A$, write $f\le g$ to mean that $f=g\circ h$ for some $h$.
When both $f\le g$ and $g\le f$ write $f\sim g$.
This defines an equivalence relation among the monomorphisms
with codomain $A$,
and an equivalence class of these monomorphisms is called a \emph{subobject} of $A$.
The class (which could be proper) of all subobjects of $A$ is partially ordered by the binary relation ``$\le$''.

In practice, any monomorphism $f:B\to A$ is referred to as a subobject,
with the understanding that it is really just a representative of an equivalence class that is the actual subobject.

In some contexts the monomorphisms in the definition of subobject are required to satisfy some other conditions.
For example, one could restrict to \emph{strong monomorphisms} (see \remarkref{strong} for the definition).

In \cite[Section~I.10]{mitchellcategories} (for example) we find the following definition:

\begin{definition} 
The \emph{image} of a morphism $f:A\to B$ is the ``smallest'' subobject of $B$ through which $f$ factors, equivalently a monomorphism $u:I\to B$ such that  
\begin{itemize}
\item $f=u\circ f'$ for some $f':A\to I$
\item if $f=v\circ g$ for any other monomorphism $v:J\to B$ and a morphism $g:A\to J$, then there is a unique morphism $i:I\to J$ such that $u=v\circ i$ 
\end{itemize}
\end{definition}

In many categories this is a very useful definition, provided that it is not hard to determine what the subobjects are. For instance, in the category of sets subobjects are subsets, in the category of groups subobjects are subgroups, etc. However, as mentioned in the introduction, since we do not know what the monomorphisms --- or the strong monomorphisms, for that matter --- are in the enchilada category we were unable to determine what subobjects are. Therefore, we were unable to use this image definition. So, we use the following instead, which we took from \cite[Definition~12.3.7]{schubert}.

\begin{definition}\label{image def}
In a category with kernels and cokernels,
a \emph{Schubert image} of a morphism $f$
is a 
kernel of any cokernel of $f$,
and dually
a \emph{Schubert coimage} of $f$
is a cokernel of any kernel of $f$.
\end{definition}

We should note that Schubert defines an image of a morphism as above. However,  in a category where subobjects can be fully identified,
images (in the subobject sense) need not satisfy the condition of \defnref{image def}
unless (for example) the category is abelian. For instance, in the category of groups the above definition is applicable if and only if the image is a normal subgroup of the codomain. 

It is a trivial consequence of the definitions that the image of a morphism, if it exists, can be factored through the Schubert image.

It may be appropriate
to mention briefly the duals of subobjects and images.
If $f:A\to B$ and $g:A\to C$ are two epimorphisms with common domain $A$, write $f\le g$ to mean that $f=h\circ g$ for some $h$.
When both $f\le g$ and $g\le f$ write $f\sim g$.
This defines an equivalence relation among the epimorphisms
with domain $A$,
and an equivalence class of these epimorphisms is called a \emph{quotient object} of $A$. The \emph{coimage} of a morphism $f:A\to B$ is the smallest quotient object of $A$ through which $f$ factors.

\section{The enchilada category}\label{enchilada}

As we mentioned in the introduction,
in the enchilada category our objects are $C^*$-algebras, and the morphisms from $A$ to $B$ are the isomorphism classes of nondegenerate $A-B$ correspondences.
The paper \cite{taco} and the memoir \cite{enchilada} (particularly Chapter~1 and 2, and Appendix~A) contain a development of all the theory we will need,
but for those not familiar with $C^*$-correspondences we give a quick review:
a \emph{Hilbert $B$-module} is a vector space $X$ equipped with
a right $B$-module structure
and a \emph{$B$-valued inner product}, i.e., a positive-definite $B$-valued sesquilinear form $\<\cdot,\cdot\>_B$ satisfying
\[
\<x,yb\>_B=\<x,y\>_Bb\midtext{and}\<x,y\>^*_B=\<y,x\>_B
\]
for all $x,y\in X,b\in B$,
and which is complete in the norm $\|x\|=\|\<x,x\>_B\|^{1/2}$.
The closed span of the inner products is an ideal $B_X$ of $B$, and $X$ is called \emph{full} if $B_X=B$.
The $B$-module operators $T$ on $X$ for which there is an operator $T^*$ satisfying
\[
\<Tx,y\>_B=\<x,T^*y\>_B\righttext{for all}x,y\in X
\]
(which is not automatic, even if $T$ is bounded)
form the $C^*$-algebra $\LL(X)$ of \emph{adjointable operators} with the operator norm,
and the closed linear span of the \emph{rank-one operators} $\theta_{x,y}$ given by
\[
\theta_{x,y}z=x\<y,z\>_B
\]
is the closed ideal $\KK(X)$ of \emph{compact operators}.

By an \emph{$A-B$ correspondence} $X$
we mean a Hilbert $B$-module $X$ with a *-homomorphism\footnote{and henceforth we will drop the *, so that all homomorphisms are assumed to be *-homomorphisms} $\phi_X:A\to \LL(X)$,
and we say the correspondence is \emph{nondegenerate} if $AX=X$.\footnote{Note that we actually mean $AX=\{ax:a\in A,x\in X\}$ --- by the Cohen-Hewitt factorization theorem this coincides with the closed span.}
\emph{All our correspondences will be nondegenerate by standing hypothesis.}
That is, from now on when we use the term correspondence we will tacitly assume the nondegeneracy condition.
An \emph{isomorphism} $U:X\to Y$ of $A-B$ correspondences is a linear bijection
such that
\[
U(axb)=aU(x)b\midtext{and}\<Ux,Uy\>_B=\<x,y\>_B
\]
for all $a\in A,b\in B,x,y\in X$.

The \emph{balanced tensor product} $X\otimes_BY$ of
an $A-B$ correspondence $X$ and a $B-C$ correspondence $Y$ is
formed as follows:
the algebraic tensor product $X\odot Y$
is given the $A-C$ bimodule structure determined on the elementary tensors by
\[
a(x\otimes y)c=ax\otimes yc
\righttext{for}a\in A,x\in X,y\in Y,c\in C,
\]
and the unique $C$-valued sesquilinear form whose values on elementary tensors are given by
\[
\<x\otimes y,u\otimes v\>_C=\bigl\<y,\<x,u\>_Bv\>_C
\righttext{for}x,u\in X,y,v\in Y.
\]
The Hausdorff completion is an $A-C$ correspondence $X\otimes_B Y$.
The term \emph{balanced} refers to the property
\[
xb\otimes y=x\otimes by
\righttext{for}x\in X,b\in B,y\in Y,
\]
which is automatically satisfied.
The \emph{identity correspondence} on $A$ is the vector space $A$ with the $A-A$ bimodule structure given by multiplication and the inner product
$\<a,b\>_A=a^*b$.
The \emph{enchilada category} has
$C^*$-algebras as objects,
and the morphisms from $A$ to $B$ are the isomorphism classes of $A-B$ correspondences,
with composition given by balanced tensor product and identity morphisms given by identity correspondences.

We write ``$\pre AX_B$ is a correspondence'' to mean that $X$ is an $A-B$ correspondence,
and we write $[X]=[\pre AX_B]$ for the associated morphism in the enchilada category.
Unless otherwise specified, $\pre AA_A$ will mean the identity correspondence over $A$.
Note that composition is given by
\[
[\pre BY_C]\circ [\pre AX_B]=[\pre A(X\otimes_B Y)_C].
\]
Actually, we will frequently drop the square brackets $[\cdot]$, since it will clean up the notation and no confusion will arise.

The \emph{multiplier algebra} of $A$ is the $C^*$-algebra $M(A)=\LL(\pre AA_A)$,
and we identify $A$ with its image under the left-module homomorphism $\phi_A:A\to M(A)$.
In this way $A$ becomes an ideal of $M(A)$.
More generally, it is a standard fact that $M(\KK(X))=\LL(X)$.
A homomorphism $\mu:A\to M(B)$ is \emph{nondegenerate} if $\mu(A)B=B$,
and nondegeneracy of a correspondence $\pre AX_B$ is equivalent to nondegeneracy of the left-module homomorphism $\phi_X:A\to M(\KK(X))$.
Every nondegenerate homomorphism $\mu:A\to M(B)$ extends uniquely to a homomorphism $\bar\mu:M(A)\to M(B)$,
and we typically drop the bar, just writing $\mu$ for the extension.

An \emph{$A-B$ \hbm} is an $A-B$ correspondence $X$
that is also equipped with an $A$-valued inner product $\pre A\<\cdot,\cdot\>$,
which satisfies the ``mirror image'' of the properties of the $B$-valued inner product:
\[
\pre A\<ax,y\>=a\pre A\<x,y\>\midtext{and}\pre A\<x,y\>^*=\pre A\<y,x\>
\]
for all $a\in A,x,y\in X$,
as well as the compatibility property
\[
\pre A\<x,y\>z=x\<y,z\>_B
\righttext{for}x,y,z\in X.
\]
A \hbm\ $\pre AX_B$ is \emph{left-full} if the closed span $A_X$ of $\pre A\<X,X\>$ is all of $A$
(and to avoid confusion we sometimes refer to the property $\clspn\<X,X\>_B=B$ as \emph{right-full}).
In any event, if $X$ is an $A-B$ \hbm\ then
$A_X$ is an ideal of $A$ that is mapped isomorphically onto $\KK(X)$ via $\phi_X$.
The \emph{dual} $B-A$ \hbm\ $\wilde X$ is the formed as follows:
write $\wilde x$ when a vector $x\in X$ is regarded as belonging to $\wilde X$,
define the $B-A$ bimodule structure by
\[
b\wilde x a=\wilde{a^*xb^*}
\]
and the inner products by
\[
\pre B\<\wilde x,\wilde y\>=\<x,y\>_B\midtext{and}\<\wilde x,\wilde y\>_A=\pre A\<x,y\>
\]
for $b\in B,x,y\in X,a\in A$.
An \emph{$A-B$ \ibm} is an $A-B$ \hbm\ that is full on both the left and the right.
Note that every Hilbert $B$-module may be regarded as a left-full $\KK(X)-B$ \hbm\ with left inner product
\[
\pre{\KK(X)}\<x,y\>=\theta_{x,y}.
\]
It is a fundamental fact about the enchilada category that the invertible morphisms are precisely the (isomorphism classes of) \ibm s (see, for example, \cite[Proposition~2.6]{taco}, \cite[Lemma~2.4]{enchilada}, and \cite[Proposition~2.3]{schweizer}).

In  this section, we show the existence of the necessary ingredients, such as kernel and image, to construct an exact sequence in the enchilada category.
As we mentioned in the introduction, we must be careful in defining the image of a morphism.

It 
is obvious
that in the enchilada category
a 0 object is any 0-dimensional $C^*$-algebra, and
the 0 morphism from $A$ to $B$ is the 0 correspondence $\leftidx{_A}{0}_B$.
If $\mu:A\to M(B)$ is a homomorphism, then $\mu(A)B$ is an $A-B$ correspondence.
When $A$ is a closed ideal\footnote{and henceforth we will drop ``closed'', so that all ideals are tacitly assumed to be closed} of $B$, we get a correspondence $\pre AA_B$.
For any ideal $I$ of a $C^*$-algebra $B$,
the quotient map $B\to B/I$ gives rise to a correspondence 
$\leftidx{_B}(B/I)_{B/I}$.

\begin{prop}\label{0 tensor}
Given correspondences
$\leftidx{_A}X_B$ and $\leftidx{_B}Y_C$,
we have
$X{\otimes}_B Y =  0$ if and only if 
\[
B_X\subset \ker\phi_Y.
\]
\end{prop}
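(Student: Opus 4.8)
The plan is to reduce the statement to a direct computation with the $C$-valued inner product on the algebraic tensor product $X\odot Y$, using the defining formula
\[
\langle x\otimes y,\, u\otimes v\rangle_C = \langle y,\, \phi_Y(\langle x,u\rangle_B)\,v\rangle_C
\qquad (x,u\in X,\ y,v\in Y).
\]
Since $X\otimes_B Y$ is the Hausdorff completion of $X\odot Y$ with respect to this positive-semidefinite form, we have $X\otimes_B Y=0$ if and only if the associated seminorm vanishes identically, i.e. $\langle z,z\rangle_C=0$ for every $z\in X\odot Y$. By the Cauchy--Schwarz inequality for Hilbert modules this is in turn equivalent to $\langle z,w\rangle_C=0$ for all $z,w\in X\odot Y$, and because both sides are sesquilinear it suffices to test this on elementary tensors. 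Thus the whole proposition comes down to analysing when the displayed expression vanishes for all $x,u,y,v$.

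For the ($\Leftarrow$) direction I would argue as follows. If $B_X\subset\ker\phi_Y$, then each inner product $\langle x,u\rangle_B$ lies in $B_X$, hence in $\ker\phi_Y$, so $\phi_Y(\langle x,u\rangle_B)=0$ and every elementary-tensor inner product above is $0$. By sesquilinearity all inner products on $X\odot Y$ vanish, so the completion is $0$.

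For the ($\Rightarrow$) direction, suppose $X\otimes_B Y=0$. By the reduction above, $\langle y,\phi_Y(\langle x,u\rangle_B)v\rangle_C=0$ for all $x,u\in X$ and $y,v\in Y$. Fixing $x,u$ and writing $b=\langle x,u\rangle_B$, I would exploit definiteness of the $C$-valued inner product on $Y$: taking $y=\phi_Y(b)v$ gives $\langle\phi_Y(b)v,\phi_Y(b)v\rangle_C=0$, whence $\phi_Y(b)v=0$ for every $v\in Y$, and therefore $\phi_Y(b)=0$. Thus $\phi_Y$ annihilates every generator $\langle x,u\rangle_B$ of $B_X$. Since $\phi_Y$ is a homomorphism, hence norm-contractive, and $B_X$ is by definition the closed span of these generators, $\phi_Y$ vanishes on all of $B_X$, i.e. $B_X\subset\ker\phi_Y$.

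The argument is short, and I expect the only point requiring genuine care to be the first reduction: justifying that a Hilbert-module completion is trivial exactly when its seminorm is identically zero, and that this vanishing can be checked on elementary tensors via Cauchy--Schwarz and sesquilinearity. The remaining steps --- the elementary-tensor formula together with the definiteness of the inner product on $Y$ and the continuity of $\phi_Y$ --- are routine and should present no difficulty.
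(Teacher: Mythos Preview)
Your proof is correct and rests on the same core computation as the paper's: the inner-product formula on elementary tensors together with definiteness and closure. The $(\Rightarrow)$ directions are virtually identical. The only genuine difference is in $(\Leftarrow)$: the paper invokes the Cohen--Hewitt factorization theorem to write $x=x_1b$ with $b\in B_X$ and then uses the balancing relation $x_1b\otimes y=x_1\otimes\phi_Y(b)y=0$, whereas you bypass factorization entirely by observing that $\langle x,u\rangle_B\in B_X\subset\ker\phi_Y$ kills the inner product directly. Your route is slightly more self-contained, since it avoids appealing to Cohen--Hewitt; the paper's route has the small advantage of showing that each elementary tensor is already zero as an element (not merely of zero norm), which is the same thing here but conceptually a hair stronger.
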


\begin{proof}
Assume that $X{\otimes}_B Y = 0$. Let $x_1,  x_2 \in X$ and $y_1, y_2 \in Y$. Then we have, 
\[
0=\la x_1{\otimes}_B y_1, x_2{\otimes}_B y_2\ra_C = \la y_1, \la x_1, x_2\ra_B y_2\ra_C.
\]
This implies that $\la x_1, x_2\ra_B y =0$ for all $y \in Y$,
i.e.,
$\la x_1, x_2\ra_B\in \ker\phi_Y$.  Since any element of $B_X$ is a limit of linear combinations of elements $\la x_1, x_2\ra_B$ where $x_i \in X$, we conclude that $B_X \subset \ker\phi_Y$. 

Now assume that $B_X\subset \ker\phi_Y$.
Of course, since
$X\otimes_B Y$ is the closed span of elementary tensors,
in order to show $X\otimes_B Y=0$ it suffices to show that $x\otimes y=0$ for any $x\in X$ and $y\in Y$:
by the Cohen-Hewitt factorization theorem we can write x as $x_1 b$ for some $x_1\in X$ and $b\in B_X$, and then we have
\[
x\otimes_B y=x_1 b {\otimes}_B y= x_1{\otimes}_B b y=x_1{\otimes}_B \phi_Y(b)y=0.
\]
\end{proof}

\begin{lemma}\label{restrict right}
Let $\pre AX_B$ be a correspondence and $C$ be a $C^*$-subalgebra of $B$ containing $B_X$. 
Then $X$ becomes an $A-C$ correspondence $\pre AX_C$ by restricting the right-module structure to $C$,
and the map
\[
x\otimes b\mapsto xb\righttext{for}x\in X,b\in CB
\]
extends uniquely to an isomorphism
\[
\pre AX_C\otimes_C(CB)_B\cong \pre AX_B.
\]

\end{lemma}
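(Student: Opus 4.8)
The plan is to treat the two assertions separately, beginning with the claim that $X$ carries an $A$--$C$ correspondence structure. The key observation is that, since $\langle x,y\rangle_B\in B_X\subseteq C$ for all $x,y\in X$, the $B$-valued inner product already takes values in the subalgebra $C$, so it may be reread verbatim as a $C$-valued inner product $\langle\cdot,\cdot\rangle_C$. Because $C$ is a $C^*$-subalgebra of $B$, the norm $\|\langle x,x\rangle_C\|^{1/2}$ it induces coincides with the original norm on $X$, so $X$ is already complete for it; the right $C$-module structure is simply the restriction of the right $B$-action, the left homomorphism $\phi_X\colon A\to\LL(X)$ is unchanged (adjointability with respect to $\langle\cdot,\cdot\rangle_C$ is the same condition as before, since the two inner products agree), and nondegeneracy $AX=X$ is untouched. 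Thus $\pre AX_C$ is a correspondence.

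For the second assertion, I would first note that $(CB)_B$ is the $C$--$B$ correspondence associated to the inclusion $C\hookrightarrow B\subseteq M(B)$, that it is nondegenerate (as $C\cdot CB=CB$), and that $\langle x,y\rangle_C\,\eta\in CB$ whenever $x,y\in X$ and $\eta\in CB$, so that the balanced tensor product $\pre AX_C\otimes_C(CB)_B$ is a legitimate $A$--$B$ correspondence. I would then define $\Phi$ on the algebraic tensor product by $\Phi(x\otimes b)=xb$, using the right $B$-action of $\pre AX_B$. The central computation is that $\Phi$ preserves inner products: for $x,y\in X$ and $b,b'\in CB$,
\[
\langle\Phi(x\otimes b),\Phi(y\otimes b')\rangle_B
=b^*\langle x,y\rangle_B\, b'
=b^*\langle x,y\rangle_C\, b'
=\langle x\otimes b,y\otimes b'\rangle_B,
\]
where the middle equality is exactly the identification $\langle\cdot,\cdot\rangle_B=\langle\cdot,\cdot\rangle_C$ on $X$. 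This single identity shows at once that $\Phi$ respects the semi-inner product on $X\odot CB$, hence descends to the Hausdorff completion (in particular it honors the balancing $xc\otimes b=x\otimes cb$, which reads $(xc)b=x(cb)$ under $\Phi$), and that the descended map is isometric, so injective with closed range.

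It remains to check that $\Phi$ is left $A$-linear --- immediate from $a(xb)=(ax)b$ --- and that it is surjective, which is the only point requiring a genuine argument. The image of $\Phi$ is the closed span of $X\cdot CB$. Using an approximate identity $(e_i)$ of the ideal $B_X$, I would show $XB_X=X$ (indeed $\|x-xe_i\|^2=\|\langle x,x\rangle_B-\langle x,x\rangle_B e_i-e_i\langle x,x\rangle_B+e_i\langle x,x\rangle_B e_i\|\to 0$, since $\langle x,x\rangle_B\in B_X$) and $B_X\subseteq CB$ (since $e_i\beta\to\beta$ with $e_i\in B_X\subseteq C$ and $\beta\in B_X\subseteq B$). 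Combining these gives $\clspn X(CB)\supseteq\clspn XB_X=X$, so $\Phi$ is onto. Uniqueness of the extension is then automatic, as the elementary tensors are dense and $\Phi$ is continuous.

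I expect the main obstacle to be not any isolated step but the bookkeeping needed to keep the two inner products $\langle\cdot,\cdot\rangle_B$ and $\langle\cdot,\cdot\rangle_C$ straight while verifying that $\Phi$ descends to the balanced tensor product. The surjectivity argument via $XB_X=X$ together with $B_X\subseteq CB$ is the one place where the hypothesis $B_X\subseteq C$ is genuinely used, and is where I would be most careful.
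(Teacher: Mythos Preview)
Your proof is correct and follows essentially the same approach as the paper: both arguments hinge on the inner-product computation $\langle x\otimes b, x'\otimes b'\rangle_B = b^*\langle x,x'\rangle_B b' = \langle xb, x'b'\rangle_B$. The paper's proof is much terser---it simply asserts that this computation ``implies the assertion regarding the isomorphism''---whereas you carefully supply the surjectivity argument via $XB_X=X$ and $B_X\subseteq CB$, which the paper leaves implicit.
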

Note that $CB$ is a closed right ideal of $B$, by the Cohen-Hewitt factorization theorem.

\begin{proof}
$X$ already has a right $B$-module structure. Restricting this to $C$, we get a right $C$-module structure. Now, we need an inner product into $C$, which we get directly since  $B_X\subset C$. 
Then
the standard computation
\begin{align*}
\<x\otimes b,x'\otimes b'\>_B
&=b^*\<x,x'\>_Bb'
=\<xb,x'b'\>_B
\end{align*}
implies the assertion regarding the isomorphism.
\end{proof}
In Lemma~\ref{restrict right}, $C$ will usually be an ideal of $B$, and then $\pre AX_C\otimes_C \pre CC_B\cong \pre AX_B$.
A frequently used special case is when $C=B$,
and then the main content is the isomorphism
$X\otimes_BB\cong X$.

\begin{lemma}\label{compose left}
Let $X$ be a Hilbert $B$-module,
and let
$\pi:A\to C$ be a nondegenerate homomorphism.
\begin{enumerate}
\item
Let $\pre AX_B$ be a correspondence,
and
suppose that $\ker\pi\subset \ker\phi_{A,X}$.
Let $\phi_{C,X}$ be the unique homomorphism
making the diagram
\[
\xymatrix{
A \ar[r]^-{\phi_{A,X}} \ar[d]_\pi
&\LL(X)
\\
C \ar[ur]_{\phi_{C,X}}
}
\]
commute,
and let $\pre CX_B$ be the associated correspondence.
Then the map
\[
c\otimes x\mapsto \phi_{C,X}(c)x\righttext{for}c\in C,x\in X
\]
extends uniquely to an isomorphism
\[
\pre AC_C\otimes_C \pre CX_B\cong \pre AX_B
\]
of $A-B$ correspondences.

\item
Suppose that $\pi$ is surjective.
Let $\pre CX_B$ and $\pre CY_B$ be correspondences,
and define correspondences $\pre AX_B$ and $\pre AY_B$ by
\[
\phi_{A,X}=\phi_{C,X}\circ\pi
\midtext{and}
\phi_{A,Y}=\phi_{C,Y}\circ\pi.
\]
Then
$\pre AX_B\cong \pre AY_B$
if and only if $\pre CX_B\cong \pre CY_B$.
\end{enumerate}
\end{lemma}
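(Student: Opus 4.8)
The plan is to treat (1) as the substantive claim --- an explicit isomorphism of correspondences --- and to obtain (2) formally from the observation that both correspondences in each pair share the same underlying Hilbert $B$-module. Throughout I extend $\phi_{C,X}$ to $M(C)$ in the usual way, so that the defining relation $\phi_{C,X}\circ\pi=\phi_{A,X}$ holds on all of $A$; the existence and uniqueness of $\phi_{C,X}$ is exactly the statement that $\phi_{A,X}$ factors through $\pi$, which is available because $\ker\pi\subset\ker\phi_{A,X}$ (and is the universal property of the quotient in the case, used later, where $\pi$ is surjective).

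For (1) I would define
\[
\Phi\colon \pre AC_C\otimes_C\pre CX_B\to \pre AX_B,\qquad \Phi(c\otimes x)=\phi_{C,X}(c)x,
\]
on elementary tensors. The key step is that $\Phi$ preserves the $B$-valued inner product: since $\langle c,c'\rangle_C=c^*c'$ in $\pre AC_C$, both
\[
\langle c\otimes x,c'\otimes x'\rangle_B=\langle x,\phi_{C,X}(c^*c')x'\rangle_B
\]
and
\[
\langle \phi_{C,X}(c)x,\phi_{C,X}(c')x'\rangle_B=\langle x,\phi_{C,X}(c)^*\phi_{C,X}(c')x'\rangle_B
\]
reduce to the same expression. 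Because the balanced tensor product is the Hausdorff completion of the span of elementary tensors, this single computation shows at once that $\Phi$ respects the balancing relation, is well defined, and extends to an isometry into $X$. The remaining checks are routine: $\Phi$ is an $A$--$B$-bimodule map because $\Phi(a(c\otimes x)b)=\phi_{C,X}(\pi(a)c)(xb)=\phi_{A,X}(a)\bigl(\phi_{C,X}(c)x\bigr)b$, using $\phi_{C,X}\circ\pi=\phi_{A,X}$; and $\Phi$ is onto because its range is $\overline{\phi_{C,X}(C)X}$, which equals $X$ by nondegeneracy of $\pre CX_B$.

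For (2) I would observe that $\pre AX_B$ and $\pre CX_B$ have literally the same underlying right Hilbert $B$-module, and likewise for $Y$; hence the data of a correspondence isomorphism in either case is a single object, namely a bijection $V\colon X\to Y$ that is right $B$-linear and preserves the $B$-valued inner product, the only difference being whether $V$ intertwines the left $A$-actions or the left $C$-actions. Since $\phi_{A,X}=\phi_{C,X}\circ\pi$, $\phi_{A,Y}=\phi_{C,Y}\circ\pi$ and $\pi$ is surjective, we have $\phi_{C,X}(c)=\phi_{A,X}(a)$ and $\phi_{C,Y}(c)=\phi_{A,Y}(a)$ whenever $c=\pi(a)$, and every $c\in C$ arises this way. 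Therefore $V$ intertwines the $A$-actions if and only if it intertwines the $C$-actions, and the equivalence $\pre AX_B\cong\pre AY_B\iff\pre CX_B\cong\pre CY_B$ follows with the very same $V$ witnessing both isomorphisms.

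The step I expect to be the main obstacle is the surjectivity of $\Phi$ in (1), i.e.\ verifying that $\pre CX_B$ is nondegenerate so that $\overline{\phi_{C,X}(C)X}=X$. When $\pi$ is surjective this is immediate, since then $\phi_{C,X}(C)=\phi_{C,X}(\pi(A))=\phi_{A,X}(A)$ and nondegeneracy is inherited directly from $\pre AX_B$; for a general nondegenerate $\pi$ one has to feed in the nondegeneracy of $\pi$ together with the standard extension of $\phi_{C,X}$ to $M(C)$. By comparison the inner-product identity, although it is the heart of the well-definedness, is a short standard manipulation, and part (2) is purely formal once the bookkeeping of $\phi_{C,X}$ in part (1) is in place.
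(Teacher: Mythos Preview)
Your proposal is correct and matches the paper's approach: the paper dismisses (1) as folklore, and your explicit inner-product computation and nondegeneracy check are exactly the standard verification. For (2), the paper derives the easy direction ($\pre CX_B\cong\pre CY_B\Rightarrow\pre AX_B\cong\pre AY_B$) by tensoring with $\pre AC_C$ and invoking (1), and proves the converse via $\ad U$ by choosing a preimage $a$ of each $c$; your uniform argument that the same $V$ intertwines the $A$-actions iff it intertwines the $C$-actions (because $\pi$ is surjective and $\phi_{A,\bullet}=\phi_{C,\bullet}\circ\pi$) is the same idea, only stated symmetrically.
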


\begin{proof}
(1) is folklore.
For (2),
first note that one direction follows quickly from part~(1):
if $\pre CX_B\cong \pre CY_B$
then
\[
\pre AX_B\cong C\otimes_C \pre CX_B\cong C\otimes_C \pre CY_B\cong \pre AY_B.
\]
Conversely
let
\[
U:\pre AX_B\variso \pre AY_B
\]
be an isomorphism.
Then $\ad U:\LL_B(X)\to \LL_B(Y)$ is also an isomorphism.
For $c\in C$ we can choose $a\in A$ such that $\pi(a)=c$,
and then
\begin{align*}
\ad U\circ\phi_{C,X}(c)
&=\ad U\circ\phi_{C,X}\circ\pi(a)
\\&=\ad U\circ\phi_{A,X}(a)
\\&=\phi_{A,Y}(a)
\\&=\phi_{C,Y}\circ\pi(a)
\\&=\phi_{C,Y}(c),
\end{align*}
so that $U$ also preserves the left $C$-module structures.
\end{proof}
Frequently-used
special cases of Lemma~\ref{compose left},
for a given $A-B$ correspondence $X$,
are
the isomorphisms
\begin{itemize}
\item
$A\otimes_AX\cong X$
(where $\pi=\id_A$),

\item
$A/I\otimes_{A/I}X'\cong X$
(when 
$I$ is an ideal of $A$ contained in $\ker\phi_X$),
and

\item
$\KK(X)\otimes_{\KK(X)}X'\cong X$
\cite[discussion preceding Proposition~2.27]{enchilada}.
\end{itemize}

In connection with 
item (2) of Lemma~\ref{compose left},
there is more to say:
\begin{prop}\label{epi}
If $\pi:A\to C$ is a surjective homomorphism,
then $\pre AC_C$ is an epimorphism in the enchilada category.
\end{prop}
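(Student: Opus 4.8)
The plan is to verify the defining property of an epimorphism directly, reducing everything to \lemref{compose left}. Write $f=[\pre AC_C]$, which is a morphism from $A$ to $C$. To show $f$ is an epimorphism I must show that whenever $g=[\pre CX_D]$ and $h=[\pre CY_D]$ are morphisms from $C$ to some object $D$ with $g\circ f=h\circ f$, then $g=h$. Unwinding the composition law of the enchilada category, the hypothesis $g\circ f=h\circ f$ says precisely that
\[
[\pre A(C\otimes_C X)_D]=[\pre A(C\otimes_C Y)_D],
\]
i.e.\ that $C\otimes_C X$ and $C\otimes_C Y$ are isomorphic as $A-D$ correspondences.

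First I would record that, since $\pi$ is surjective, it is a nondegenerate homomorphism $A\to C\subset M(C)$ (indeed $\pi(A)C=C$ because $\pi$ is onto), so that $\pre AC_C$ really is a nondegenerate morphism and \lemref{compose left} is available. Next, I would manufacture the two $A-D$ correspondences to which part~(2) of that lemma refers: define $\pre AX_D$ and $\pre AY_D$ by $\phi_{A,X}=\phi_{C,X}\circ\pi$ and $\phi_{A,Y}=\phi_{C,Y}\circ\pi$. Because $\ker\pi\subset\ker\phi_{A,X}$ and $\ker\pi\subset\ker\phi_{A,Y}$ hold automatically, part~(1) of \lemref{compose left} (equivalently the special case $A/\ker\pi\otimes_{A/\ker\pi}X'\cong X$, using $C\cong A/\ker\pi$) supplies isomorphisms
\[
\pre AC_C\otimes_C \pre CX_D\cong \pre AX_D
\midtext{and}
\pre AC_C\otimes_C \pre CY_D\cong \pre AY_D.
\]
Combining these with the displayed hypothesis yields $\pre AX_D\cong \pre AY_D$.

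Finally, since $\pi$ is surjective and the left actions of $\pre AX_D$ and $\pre AY_D$ are exactly $\phi_{C,X}\circ\pi$ and $\phi_{C,Y}\circ\pi$, part~(2) of \lemref{compose left} gives $\pre AX_D\cong \pre AY_D$ if and only if $\pre CX_D\cong \pre CY_D$. Hence $\pre CX_D\cong \pre CY_D$, that is $g=h$, as required.

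The argument is short because the heavy lifting is done by \lemref{compose left}; the only point demanding care is bookkeeping, namely confirming that the $A-D$ correspondences produced by part~(1) have left actions of precisely the form $\phi_{C,\bullet}\circ\pi$ demanded by part~(2), so that the two parts of the lemma chain together correctly. Once this identification is pinned down the conclusion is automatic. For completeness I would also note the routine checks that all the correspondences in sight are nondegenerate, and hence are genuine morphisms.
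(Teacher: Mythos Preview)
Your proof is correct and follows essentially the same approach as the paper: reduce the epimorphism check to \lemref{compose left}, using part~(1) to identify $\pre AC_C\otimes_C X$ with the $A$-correspondence $\pre AX_D$ obtained via $\phi_{A,X}=\phi_{C,X}\circ\pi$, and then part~(2) to conclude $\pre CX_D\cong\pre CY_D$ from $\pre AX_D\cong\pre AY_D$. Your write-up is more explicit about the bookkeeping (nondegeneracy of $\pi$, matching the left actions so that the two parts of the lemma chain), but the argument is the same.
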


\begin{proof}
Given $C-B$ correspondences $X$ and $Y$
such that
\[
\pre AC\otimes_C X_B\cong \pre AC\otimes_C Y_B,
\]
we must show that
$\pre CX_B\cong \pre CY_B$.
Using Lemma~\ref{compose left},
we can regard $X$ and $Y$ as $A-B$ correspondences $\pre AX_B$ and $\pre AY_B$,
and then part (1) of the lemma and the hypothesis together tell us that 
$\pre AX_B\cong \pre AY_B$,
and so by part (2) of the lemma we also have
$\pre CX_B\cong \pre CY_B$.
\end{proof}

\begin{prop}\label{mono inj}
If $\pre AX_B$ is a monomorphism in the enchilada category, then $\phi_X: A\to \LL_{B}(X)$ is injective. 
\end{prop}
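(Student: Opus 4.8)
The plan is to argue the contrapositive. A morphism $[\pre AX_B]$ is a monomorphism precisely when, for any correspondences $\pre DG_A$ and $\pre DH_A$ with common domain $D$, the equality $[X]\circ[G]=[X]\circ[H]$ forces $[G]=[H]$; and by the composition law this means $[\pre D(G\otimes_A X)_B]=[\pre D(H\otimes_A X)_B]$ implies $[G]=[H]$. So to show that a non-injective $\phi_X$ cannot be a monomorphism, I would assume $I:=\ker\phi_X$ is a nonzero ideal of $A$ and then exhibit two \emph{non-isomorphic} correspondences into $A$ whose composites with $X$ coincide.

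The natural test correspondences use the ideal $I$ itself as the common domain. I would take $G=\pre I I_A$, the correspondence given by $I$ as an ideal of $A$ (left multiplication by $I$, and $\langle i,i'\rangle_A=i^*i'$), and $H=\pre I 0_A$, the zero morphism $0_{I,A}$. First I would confirm that $G$ is a genuine nondegenerate correspondence: right-completeness and positive-definiteness are automatic because $I$ is a closed ideal, and nondegeneracy $I\cdot I=I$ follows from Cohen--Hewitt. Since $I\neq 0$, the correspondence $\pre I I_A$ is nonzero and hence not isomorphic to the zero correspondence, so $[G]\neq[H]$.

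Next I would compute the two composites. For $H$ this is immediate: $H\otimes_A X=0$. For $G$ the key input is \propref{0 tensor}: the closed span $A_G$ of $\langle I,I\rangle_A$ equals $I$ (again by Cohen--Hewitt), and $I=\ker\phi_X$, so the hypothesis $A_G\subset\ker\phi_X$ holds (with equality) and yields $G\otimes_A X=\pre I I_A\otimes_A X=0$. Therefore $[G\otimes_A X]=[0]=[H\otimes_A X]$, i.e. $[X]\circ[G]=[X]\circ[H]$, while $[G]\neq[H]$. This contradicts the assumption that $[X]$ is a monomorphism, and so $\ker\phi_X=0$.

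I expect the only substantive step to be choosing the right pair of test correspondences and recognizing that \propref{0 tensor} does the real work; everything afterward is bookkeeping. The one point deserving care is verifying that $\pre I I_A$ and the zero correspondence are distinct \emph{morphisms} — that is, non-isomorphic correspondences — which reduces to the evident fact that a nonzero vector space admits no linear bijection onto $\{0\}$.
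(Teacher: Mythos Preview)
Your proof is correct and is essentially identical to the paper's: both argue the contrapositive by taking the nonzero ideal $K=\ker\phi_X$, comparing the correspondences $\pre KK_A$ and $\pre K0_A$, and invoking \propref{0 tensor} (together with $\langle K,K\rangle_A=K$) to see that both tensor with $X$ to zero. The only difference is that you spell out the nondegeneracy and Cohen--Hewitt details more explicitly than the paper does.
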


\begin{proof}
Assume that $\phi_X$ is not injective. Then, $K=\ker\phi_X$ is a non-zero ideal of $A$. Consider the correspondence ${\leftidx_{K}K}_A$.  Since $\<K,K\>_K=K$, by Proposition~\ref{0 tensor} we have
\[
K\otimes_A X=0=0\otimes_A X,
\]
but $K\neq 0$, so $X$ is not a monomorphism. 
\end{proof}

We  show that the converse of \propref{mono inj}  is not true in general:

\begin{prop}
There exists an injective \C-correspondence that is not a monomorphism in the enchilada category.
\end{prop}

\begin{proof}

Let  $H$ be an infinite dimensional Hilbert space. Then we may view $H$ as an injective \C-correspondence over $\CC$.
Consider the usual Hilbert spaces $\CC$ and $\CC^2$.
We have the isomorphism 
\[
\CC\otsc H\cong \CC^2\otsc H,
\]
but $\CC$ and $\CC^2$ are not isomorphic as $\CC-\CC$ correspondences.
\end{proof}

\begin{lemma}\label{X tilde X}
If $X$ is an $A-B$ Hilbert bimodule, then
\begin{align*}
\wilde X\otimes_AX&\cong B_X\righttext{as $B-B$ correspondences}
\\
X\otimes_B\wilde X&\cong A_Y\righttext{as $A-A$ correspondences.}
\end{align*}
\end{lemma}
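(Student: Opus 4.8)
The plan is to write down, in each case, the obvious candidate map on elementary tensors and then verify that it is an inner-product-preserving bimodule homomorphism onto the relevant ideal, from which it follows that it extends to an isomorphism of correspondences. (Here I read the target of the second isomorphism as $A_X$, the closed span of $\pre A\<X,X\>$; the $A_Y$ in the statement is an evident typo.) For the first isomorphism I would define
\[
\Phi:\wilde X\otimes_A X\to B_X,\qquad \Phi(\wilde x\otimes_A x')=\<x,x'\>_B,
\]
and for the second
\[
\Psi:X\otimes_B\wilde X\to A_X,\qquad \Psi(x\otimes_B\wilde y)=\pre A\<x,y\>,
\]
where $B_X$ and $A_X$ carry their canonical correspondence structures as ideals, with inner products $\<b,b'\>_B=b^*b'$ and $\<a,a'\>_A=a^*a'$ and module actions given by multiplication. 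The uniform scheme is: first check compatibility with the bimodule actions, then check preservation of the $C^*$-valued inner products. Inner-product preservation does double duty --- it shows the map is well defined on the \emph{balanced} tensor product (it kills the null space, in particular the balancing relations) and that it is isometric, hence extends to an isometry of the Hausdorff completion. Since the image is the closed span of the inner products, namely $B_X$ respectively $A_X$, each extended map is a surjective isometry, i.e.\ an isomorphism of correspondences.

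For $\Phi$ I would compute the inner product of two elementary tensors in $\wilde X\otimes_A X$ using the defining formula for the tensor-product inner product together with $\<\wilde x,\wilde u\>_A=\pre A\<x,u\>$, and then collapse it with the compatibility identity $\pre A\<x,u\>u'=x\<u,u'\>_B$ to obtain $\<x',x\>_B\<u,u'\>_B$. This is exactly $\<\,\<x,x'\>_B,\<u,u'\>_B\,\>_B$, so $\Phi$ preserves inner products. The bimodule identities are then routine from $b\wilde x=\wilde{xb^*}$ and $\<xb,y\>_B=b^*\<x,y\>_B$.

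For $\Psi$ the computation is parallel but slightly longer. I would expand $\<x\otimes_B\wilde y,u\otimes_B\wilde v\>_A$, rewrite the left $B$-action $\<x,u\>_B\cdot\wilde v=\wilde{v\<u,x\>_B}$, apply compatibility in the form $v\<u,x\>_B=\pre A\<v,u\>x$, and finally use the conjugate-linearity rule $\pre A\<y,ax\>=\pre A\<y,x\>a^*$ to reach $\pre A\<y,x\>\pre A\<u,v\>$, which matches $\<\,\pre A\<x,y\>,\pre A\<u,v\>\,\>_A$. Again the bimodule identities follow from the defining formulas for $\wilde X$, using $\pre A\<ax,y\>=a\pre A\<x,y\>$ on the left and the right-action computation $\wilde y\,a=\wilde{a^*y}$ together with $\pre A\<x,a^*y\>=\pre A\<x,y\>a$ on the right.

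The main obstacle is the inner-product bookkeeping for $\Psi$: one must correctly track how right multiplication in $\wilde X$ converts to left $A$-multiplication after passing through the dual, and how this interacts with the conjugate-symmetry convention for $\pre A\<\cdot,\cdot\>$ and with the compatibility condition linking the two inner products. Once these identities are pinned down, surjectivity onto $A_X$ (and onto $B_X$ in the first case) and the isometry property are immediate, and no further difficulty remains.
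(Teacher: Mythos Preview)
Your proposal is correct and is precisely the standard argument the paper alludes to: the paper's own ``proof'' is simply the remark that the result is folklore, citing \cite[Proposition~3.28]{tfb} for the imprimitivity-bimodule case and asserting that the general case follows by the same techniques. Your explicit maps $\Phi(\wilde x\otimes x')=\<x,x'\>_B$ and $\Psi(x\otimes\wilde y)=\pre A\<x,y\>$, together with the inner-product and bimodule verifications you outline (including the correct identification of the typo $A_Y\to A_X$), are exactly those techniques written out in full.
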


\begin{proof}
This is folklore.
If $X$ is an \ibm, then this is \cite[Proposition~3.28]{tfb} (for example),
and the general case can be proved using the same techniques.
\end{proof}

\begin{lemma}\label{BBX}
If $\pre AX_B$ and $\pre BY_C$ are correspondences, then
\[
X\otimes_BY\cong X\otimes_{B_X}Y
\]
as $A-C$ correspondences.
\end{lemma}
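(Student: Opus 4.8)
The plan is to realize both balanced tensor products as Hausdorff completions of the \emph{same} algebraic tensor product $X\odot Y$ equipped with the \emph{same} $C$-valued sesquilinear form, so that the identity map on elementary tensors already furnishes the isomorphism. First I would put the two factors over $B_X$. Applying \lemref{restrict right} with the ideal $B_X$ in place of $C$ (legitimate, since $B_X$ trivially contains $B_X$), I view $X$ as an $A-B_X$ correspondence $\pre AX_{B_X}$ by restricting the right action, noting that its $B_X$-valued inner product is literally $\langle x,u\rangle_{B_X}=\langle x,u\rangle_B$, which already lands in $B_X$. Likewise I view $Y$ as a $B_X-C$ correspondence $\pre{B_X}Y_C$ by restricting the left homomorphism $\phi_Y$ to $B_X$. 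Both $X\otimes_BY$ and $X\otimes_{B_X}Y$ are then constructed from the common vector space $X\odot Y$.

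The key observation is that the two defining $C$-valued forms coincide on $X\odot Y$: for $x,u\in X$ and $y,v\in Y$,
\[
\langle x\otimes y,u\otimes v\rangle_C=\bigl\langle y,\phi_Y(\langle x,u\rangle_B)v\bigr\rangle_C,
\]
and since $\langle x,u\rangle_B\in B_X$ the right-hand side is exactly the same expression whether the middle factor is read via the full $B$-action on $Y$ or via its restriction to $B_X$. Hence the two pre-inner-product spaces on $X\odot Y$ are identical, their null spaces agree, and their Hausdorff completions coincide. I would then check that the identification is compatible with the left $A$-action $a(x\otimes y)=ax\otimes y$ and the right $C$-action $(x\otimes y)c=x\otimes yc$, both of which are untouched by the construction, so that the identity map on elementary tensors extends to an isomorphism $X\otimes_{B_X}Y\variso X\otimes_BY$ of $A-C$ correspondences; density of elementary tensors gives surjectivity, and the agreement of forms gives isometry and hence injectivity.

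The one point I expect to require care is the balancing: the relation $xb\otimes y=x\otimes by$ holds for all $b\in B$ in $X\otimes_BY$ but only for $b\in B_X$ in the na\"ive construction of $X\otimes_{B_X}Y$, so a priori $X\otimes_BY$ is merely a quotient of $X\otimes_{B_X}Y$. I expect this to dissolve precisely because the computation showing that $xb\otimes y-x\otimes by$ lies in the null space uses only the inner-product formula above together with $\langle xb,u\rangle_B=b^*\langle x,u\rangle_B$, an identity common to both forms; thus the extra $B$-balancing is \emph{already} automatic on the $B_X$ side, and the quotient map is in fact the claimed isometry. A secondary, harmless point is that $\pre{B_X}Y_C$ may be degenerate as a left $B_X$-module, but this does not obstruct the tensor-product construction, and since $XB_X=X$ by Cohen--Hewitt the product only ever sees $\overline{B_XY}$ in any case.
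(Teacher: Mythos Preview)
Your proof is correct. Both you and the paper exploit the same core fact---that $\langle X,X\rangle_B$ already lands in $B_X$---but you deploy it differently. The paper argues by showing that the $B$-balancing relation $xb\otimes y=x\otimes by$ (for arbitrary $b\in B$) already holds in $X\otimes_{B_X}Y$: it invokes Cohen--Hewitt to write $x=x'b'$ with $b'\in B_X$, then uses that $b'b\in B_X$ (since $B_X$ is an ideal of $B$) to move $b'b$ across the tensor. Your route is more direct: since the defining sesquilinear form $\langle x\otimes y,u\otimes v\rangle_C=\langle y,\langle x,u\rangle_B\cdot v\rangle_C$ only ever sees $\langle x,u\rangle_B\in B_X$, the two pre-inner-product structures on $X\odot Y$ are literally identical, so their Hausdorff completions agree, and the balancing discussion becomes a corollary rather than the main step. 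Your approach dispenses with Cohen--Hewitt in the body of the argument; the paper's approach makes the equality of balancing relations explicit at the level of elementary tensors, which some readers may find more concrete.
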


\begin{proof}
Just note that the
balancing relations determined by $\otimes_B$ and $\otimes_{B_X}$ coincide:
for every $x\in X$, $b\in B$, and $y\in Y$
we can choose $x'\in X,b'\in B_X$ such that $x=x'b'$,
and then
the following computation in $X\otimes_{B_X} Y$ suffices:
\begin{align*}
xb\otimes y
&=x'b'b\otimes y
\\&=x'\otimes b'by\righttext{(since $b'b\in B_X$)}
\\&=x'b'\otimes by
\\&=x\otimes by.
\end{align*}
\end{proof}

\begin{prop}\label{split mono is left-full}
A  left-full \hbm is a split monomorphism in the enchilada category.
\end{prop}

\begin{proof}
Assume that $X$ is a left-full \hbm.
Then we have a $B-A$
Hilbert bimodule $\wilde X$, and
\begin{align*}
[\wilde X]\circ [X]
&=[X\otimes_B\wilde X]
\\&=[X\otimes_{B_X} \wilde X]\righttext{(by Lemma~\ref{BBX})}
\\&=[A],
\end{align*}
so $[X]$ is a split monomorphism
\end{proof}

We now show that inverse of Proposition~\ref{split mono is left-full} is not true in general.

\begin{prop}\label{hb}
The Enchilada category has a split monomorphism that is not the isomorphism class of a Hilbert bimodule. However, if the isomorphism class of a  Hilbert bimodule \AXB\ is a split monomorphism, then \AXB\ has to be left full. 
\end{prop}

\begin{proof} Let $X$ be the injective $\CC - \CC^2$ correspondence associated to the homomorphism $a\mapsto (a,a),$ and let $Y$ be the $\CC^2-\CC$ correspondence associated the homomorphism $(a,b)\mapsto a.$ Then, we have 
\[ \pre {\CC}(X{\otimes}_{\CC^2} Y)_{\CC} \cong \pre {\CC}\CC_{\CC}. \]
However, the correspondence $\pre \CC(X)_{\CC^2}$  is not a Hilbert bimodule.

To prove the second half of the proposition, let  \AXB\ be a Hilbert bimodule such that [\AXB] is a split monomorphism in the enchilada category. Then, there exists a correspondence $\pre BY_A$  such that 
\[ \pre AX_B \otss \pre BY_A \cong \pre AA_A.\]
Then the $C^*$-correspondence $\pre A(X\otss Y)_A$ must be right full, i.e, 
\[ \<Y, \<X,X\>_B\cdot Y\>_A = A. \]
Let $Z=B_XY$.
Note that
\[
X\otimes_B Y
=X\otimes_{B_X} Y
=X\otimes_{B_X} Z
\]
as $A-A$ correspondences.
Thus
\begin{align*}
\X\otimes_A X\otimes_B Y
&=\X\otimes_A X\otimes_{B_X} Y
\\&\cong B_X\otimes_{B_X} Y
\\&\cong B_X Y
\\&=Z
\end{align*}
as $B-A$ correspondences.

Thus
\begin{align*}
A_X
&\cong A_X\otimes_A A
\\&\cong X\otimes_B \X\otimes_A X\otimes_B Y
\\&\cong X\otimes_B Z
\\&=X\otimes_B Y
\\&\cong A
\end{align*}
as $A-A$ correspondences. Therefore the ideal $A_X$ must be all of $A$.
\end{proof}

\begin{theorem}\label{ker X}
Let X be an $A-B$ correspondence and let $K$ be the kernel of the associated homomorphism $\phi_X:A\to \LL_B$$(X)$. Then
the correspondence $\pre KK_A$ is a kernel of $X$.
\end{theorem}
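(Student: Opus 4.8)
The plan is to verify directly the two defining conditions of a kernel for the morphism $h=[\pre KK_A]\colon K\to A$, where $\pre KK_A$ is the $K$--$A$ correspondence coming from the inclusion of the ideal $K=\ker\phi_X$ into $A$. The guiding principle is that composition is the balanced tensor product, so every condition of the form ``$[X]\circ(\,\cdot\,)=0$'' should be reduced, via \propref{0 tensor}, to a containment of the relevant right-hand inner-product ideal in $\ker\phi_X$.

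First I would check that $[X]\circ h=0$. Since composition gives $[X]\circ h=[K\otimes_A X]$, this amounts to showing $K\otimes_A X=0$. The $A$-valued inner products of $\pre KK_A$ are $\<k_1,k_2\>_A=k_1^*k_2$, whose closed span is $\clspn K^*K=K=\ker\phi_X$; so the hypothesis of \propref{0 tensor} is met and $K\otimes_A X=0$.

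Next I would establish the universal property. Given a morphism $k=[\pre DZ_A]$ with $[X]\circ k=0$, that is $Z\otimes_A X=0$, \propref{0 tensor} tells me that the closed span $A_Z$ of the $A$-valued inner products of $Z$ is contained in $\ker\phi_X=K$. Hence those inner products already take values in the ideal $K$, and I can simply restrict the right-module structure of $Z$ to $K$ to obtain a (still nondegenerate) $D$--$K$ correspondence $W:=\pre DZ_K$; this is the candidate factoring morphism $p$. Because $K$ is an ideal we have $KA=K$, so \lemref{restrict right} supplies an isomorphism $W\otimes_K\pre KK_A\cong\pre DZ_A$, which is exactly $h\circ[W]=k$. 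For the uniqueness clause it suffices to know that $h$ is a monomorphism, for then $h\circ p=h\circ p'$ forces $p=p'$. Here I would observe that $\pre KK_A$ is a Hilbert bimodule, with left inner product $\pre K\<k_1,k_2\>=k_1k_2^*$, and that it is left-full since $\clspn KK^*=K$; so by \propref{split mono is left-full} it is a split monomorphism, in particular a monomorphism.

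I expect each step to be short, so the ``hard part'' is really bookkeeping: matching the correspondences to the correct slots in \propref{0 tensor} and \lemref{restrict right} (in particular keeping straight that the inner-product ideal governing $Z\otimes_A X$ is $A_Z$, the right-hand ideal of $Z$, which plays the role of $B_X$ in the proposition), and confirming that restricting the right-module structure of $Z$ to $K$ genuinely yields a nondegenerate correspondence with $W\otimes_K\pre KK_A\cong Z$. The one conceptual input is recognizing that $\pre KK_A$ is a left-full Hilbert bimodule, which lets me invoke \propref{split mono is left-full} and dispose of uniqueness at a stroke rather than arguing it by hand.
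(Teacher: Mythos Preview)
Your proposal is correct and follows essentially the same route as the paper's own proof: both verify $K\otimes_A X=0$ via \propref{0 tensor}, obtain the factoring correspondence by restricting the right module structure using \lemref{restrict right}, and dispose of uniqueness by observing that $\pre KK_A$ is a left-full Hilbert bimodule and invoking \propref{split mono is left-full}. The only differences are notational.
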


\begin{proof}
First, $K\otimes_KX=0$ by Lemma~\ref{0 tensor}, because $\<K,K\>=K=\ker \phi_X$.

Now suppose that $\pre CY_A$ is a correspondence such that $Y\otimes_AX=0$.
Then by Lemma~\ref{0 tensor} we have
\[
A_Y\subset \ker\phi_X=K,
\]
so by Lemma~\ref{restrict right} we 
get
a 
correspondence $\pre CY_K$
such that
\[
\pre CY_K\otimes_KK_A\cong \pre CY_A.
\]
Moreover, $\pre CY_K$ is unique up to isomorphism because $\pre KK_A$ is a
left-full \hbm,
and hence is a split monomorphism by Proposition~\ref{split mono is left-full},
so in particular is a monomorphism.
\end{proof}

\begin{corollary}\label{left-full is kernel}
A correspondence $\pre AX_B$ is a left-full Hilbert bimodule
if and only if
$X$ is a kernel in the enchilada category,
in which case it is a kernel of $\pre B(B/B_X)_{B/B_X}$.
\end{corollary}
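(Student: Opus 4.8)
The plan is to prove both directions by combining the explicit kernel produced in \theoremref{ker X} with the fact that a kernel, being an equalizer, is unique up to isomorphism. Since the isomorphisms of the enchilada category are exactly the imprimitivity bimodules, here ``unique up to isomorphism'' means unique up to Morita equivalence, and the two implications will be dual exploitations of this.

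For the forward implication, assume $\pre AX_B$ is a left-full \hbm. First I would restrict the right-module structure to the ideal $B_X=\clspn\<X,X\>_B$. Because $X$ is left-full ($A_X=A$) and right-full over $B_X$ by the very definition of $B_X$, the module $\pre AX_{B_X}$ is an $A-B_X$ \ibm, so $[\pre AX_{B_X}]$ is an invertible morphism $A\to B_X$. Applying \lemref{restrict right} with $C=B_X$ yields the factorization $\pre AX_{B_X}\otimes_{B_X}\pre{B_X}(B_X)_B\cong\pre AX_B$, that is, $[\pre AX_B]=[\pre{B_X}(B_X)_B]\circ[\pre AX_{B_X}]$. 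Now I would apply \theoremref{ker X} to the correspondence $\pre B(B/B_X)_{B/B_X}$: its left homomorphism acts through the quotient map $B\to B/B_X$ and so has kernel exactly $B_X$, whence \theoremref{ker X} identifies $\pre{B_X}(B_X)_B$ as a kernel of $\pre B(B/B_X)_{B/B_X}$. Finally, since $[\pre AX_{B_X}]$ is an isomorphism, the observation recorded in the remark following the definition of equalizer---that precomposing a kernel with an isomorphism again produces a kernel---shows that $[\pre AX_B]$ is itself a kernel of $\pre B(B/B_X)_{B/B_X}$. This settles both that $X$ is a kernel and the ``in which case'' assertion.

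For the reverse implication, assume $\pre AX_B$ is a kernel, say of a morphism $[\pre BZ_D]$. Setting $K=\ker\phi_Z$, \theoremref{ker X} exhibits the explicit kernel $\pre KK_B$ of $[\pre BZ_D]$, and $\pre KK_B$ is a left-full \hbm. Uniqueness of kernels then supplies an isomorphism $u\colon A\to K$ with $[\pre KK_B]\circ u=[\pre AX_B]$. I would conclude by a formal argument: $u$, being an isomorphism, is a split monomorphism; $\pre KK_B$ is a split monomorphism by \propref{split mono is left-full}; a composite of split monomorphisms is again a split monomorphism; hence $[\pre AX_B]$ is a split monomorphism, and \propref{split mono is left-full} then forces $X$ to be a left-full \hbm.

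I expect the main obstacle to be structural rather than computational. The delicate points are verifying that a left-full \hbm\ really does restrict to an honest $A-B_X$ \ibm\ (so that $[\pre AX_{B_X}]$ is genuinely invertible), and correctly invoking uniqueness of kernels in a setting where the mediating isomorphisms are bimodules and the objects are merely Morita equivalent. In the forward direction in particular one must take care to match the factorization from \lemref{restrict right} exactly with the composite ``kernel $\circ$ isomorphism'' so that the equalizer-uniqueness remark applies on the nose.
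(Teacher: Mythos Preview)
Your proposal is correct and follows essentially the same approach as the paper: both directions hinge on \theoremref{ker X} together with the factorization from \lemref{restrict right} and the uniqueness of kernels up to isomorphism. The only cosmetic difference is in the implication ``kernel $\Rightarrow$ left-full'': the paper simply asserts that since $\pre KK_B$ is a left-full \hbm\ and $X$ differs from it by an \ibm, so is $X$, whereas you route this through \propref{split mono is left-full} by observing that a composite of split monomorphisms is a split monomorphism --- a slightly more explicit justification of the same fact.
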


\begin{proof}
First assume that $\pre AX_B$ is a kernel of a correspondence $\pre BY_C$.
Then by Theorem~\ref{ker X} $\pre AX_B$ is isomorphic to the kernel $\pre KK_B$,
where $K=\ker \phi_Y$,
in the sense that there is 
an \ibm\ $Y$ 
(equivalently, $[Y]:A\to K$ is an isomorphism in the enchilada category)
making the diagram
\[
\xymatrix{
A \ar[r]^-X \ar[d]_Y
&B
\\
K \ar[ur]_K
}
\]
commute.
Since $\pre KK_B$ is a left-full \hbm, so is $\pre AX_B$.

Conversely, assume that $X$ is a left-full \hbm.
By Lem\-ma~\ref{restrict right} we can regard $X$ as an 
\ibm\ $\pre AX_{B_X}$, and
\[
\pre AX_B\cong \pre AX_{B_X}\otimes_{B_X} \pre {B_X}(B_X)_B.
\]
By Theorem~\ref{ker X}, 
$\pre {B_X}(B_X)_B$
is a kernel of 
$\pre B(B/B_X)_{B/B_X}$,
and hence so is $\pre AX_B$
because $\pre AX_{B_X}$ is an \ibm.
\end{proof}

\begin{prop}\label{coker}
A
correspondence $\pre AX_B$ has cokernel $\pre B(B/B_X)_{B/B_X}$.
\end{prop}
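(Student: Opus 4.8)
The plan is to verify directly that $h:=[\pre B(B/B_X)_{B/B_X}]$ satisfies the two defining properties of a cokernel of $f:=[\pre AX_B]$; write $q\colon B\to B/B_X$ for the quotient map. First I would check $h\circ f=0$. Since composition is balanced tensor product, $h\circ f=[\pre A(X\otimes_B(B/B_X))_{B/B_X}]$, so it suffices to show $X\otimes_B(B/B_X)=0$. The left action of $B$ on the correspondence $\pre B(B/B_X)_{B/B_X}$ is $q$ followed by the faithful left-multiplication action of $B/B_X$ on itself, so the associated homomorphism $B\to\LL(B/B_X)$ has kernel exactly $B_X$; in particular $B_X\subset\ker\phi_{B/B_X}$, and Proposition~\ref{0 tensor} yields $X\otimes_B(B/B_X)=0$.

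Next I would establish the universal property. Suppose $\pre BY_D$ is a correspondence with $[Y]\circ f=0$, i.e.\ $X\otimes_BY=0$. By Proposition~\ref{0 tensor} this forces $B_X\subset\ker\phi_Y$, so the left $B$-action on $Y$ factors through $q$. Applying Lemma~\ref{compose left}(1) with $\pi=q$ (nondegenerate since surjective, and satisfying $\ker q=B_X\subset\ker\phi_Y$) endows $Y$ with the structure of a $(B/B_X)-D$ correspondence $\pre{B/B_X}Y_D$ and produces an isomorphism
\[
\pre B(B/B_X)_{B/B_X}\otimes_{B/B_X}\pre{B/B_X}Y_D\cong\pre BY_D
\]
of $B-D$ correspondences. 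Setting $p:=[\pre{B/B_X}Y_D]\colon B/B_X\to D$, this isomorphism is precisely the equation $p\circ h=[Y]$, which is the required factorization.

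Finally I would address uniqueness of $p$. The key observation is that, since $q$ is surjective, Proposition~\ref{epi} tells us $h=[\pre B(B/B_X)_{B/B_X}]$ is an epimorphism in the enchilada category; hence any two morphisms with $p\circ h=p'\circ h$ must coincide, which is exactly the uniqueness clause in the definition of cokernel. I expect the real content of the argument to lie not in existence --- which is essentially handed to us by Lemma~\ref{compose left}(1) --- but in recognizing that the uniqueness demanded by the universal property is precisely the cancellation property furnished by Proposition~\ref{epi}. This is worth emphasizing because the enchilada category is not self-dual, so one cannot simply dualize Theorem~\ref{ker X} to read off cokernels from kernels; the two halves of the argument must be assembled separately from Proposition~\ref{0 tensor}, Lemma~\ref{compose left}, and Proposition~\ref{epi}.
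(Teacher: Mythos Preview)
Your proposal is correct and follows essentially the same route as the paper: you verify $X\otimes_B(B/B_X)=0$ via Proposition~\ref{0 tensor}, obtain the factorization through $B/B_X$ from Lemma~\ref{compose left}, and deduce uniqueness from the fact that $\pre B(B/B_X)_{B/B_X}$ is an epimorphism (Proposition~\ref{epi}). The paper's proof is identical in structure, only more terse.
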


\begin{proof}
First, $X\otimes_B B/B_X=0$,
because $B_X$ is the kernel of the quotient map $B\to B/B_X$.
Now suppose that $\pre BY_C$ is a correspondence such that
$X\otimes_BY=0$.
Then $B_X\subset \ker\phi_Y$ by Lemma~\ref{0 tensor},
so by Lemma~\ref{compose left} we may regard $Y$ as a $B/B_X-C$ correspondence,
and we have
\begin{equation}\label{factor}
\pre B(B/B_X)_{B/B_X}\otimes_{B/B_X} \pre {B/B_X}Y_C\cong \pre BY_C.
\end{equation}
Moreover,
by Proposition~\ref{epi} $\pre B(B/B_X)_{B/B_X}$
is an epimorphism in the enchilada category,
so \eqref{factor} determines $\pre {B/B_X}Y_C$ up to isomorphism.
\end{proof}

Finally, we are ready for images. 

\begin{theorem}\label{image thm}
A
correspondence $\pre AX_B$ has Schubert image $\pre {B_X}(B_X)_B$ and Schubert coimage $\pre A(A/\ker\phi)_{A/\ker\phi}$,
where $\phi=\phi_X$ is the associated homomorphism.
\end{theorem}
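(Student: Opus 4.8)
The plan is to read both assertions directly off the cokernel and kernel computations already established, since by \defnref{image def} a Schubert image is a kernel of \emph{any} cokernel of $f$, and a Schubert coimage is a cokernel of \emph{any} kernel of $f$. Because kernels and cokernels are unique up to isomorphism, it suffices to build one cokernel (resp.\ kernel) of $\pre AX_B$ and then compute one kernel (resp.\ cokernel) of it; the answer is then independent of the choices.

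For the Schubert image I would first apply \propref{coker} to obtain the concrete cokernel $\pre B(B/B_X)_{B/B_X}$ of $\pre AX_B$, and then compute a kernel of this cokernel via \theoremref{ker X}. The latter requires identifying the kernel of the homomorphism associated to the correspondence $\pre B(B/B_X)_{B/B_X}$. This homomorphism is the left action of $B$ on the identity correspondence over $B/B_X$, so it is a map $B\to\LL(\pre{B/B_X}(B/B_X)_{B/B_X})=M(B/B_X)$, namely the quotient map $B\to B/B_X$ followed by the canonical embedding $B/B_X\hookrightarrow M(B/B_X)$; since the latter is injective, its kernel is exactly $B_X$. Then \theoremref{ker X} delivers $\pre{B_X}(B_X)_B$ as a kernel of the cokernel, which is the asserted Schubert image.

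For the Schubert coimage I would reverse the order: by \theoremref{ker X} a kernel of $\pre AX_B$ is $\pre KK_A$, where $K=\ker\phi_X$, and I then compute a cokernel of $\pre KK_A$ using \propref{coker}. This needs the closed span of the $A$-valued inner products $\<k,k'\>_A=k^*k'$ on $\pre KK_A$; since $K$ is a $C^*$-algebra it equals the closed span $\clspn K^*K$ of its own products, which is all of $K$. Thus the right-fullness ideal relevant to \propref{coker} is $K$ itself, and that proposition yields $\pre A(A/K)_{A/K}=\pre A(A/\ker\phi)_{A/\ker\phi}$ as the Schubert coimage.

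The only genuinely substantive points are the two ideal identifications, and I expect the first to be the more delicate: one must be sure that the homomorphism attached to the quotient correspondence $\pre B(B/B_X)_{B/B_X}$ really is the quotient map into $M(B/B_X)$ with kernel exactly $B_X$, using the identification $\LL(\pre CC_C)=M(C)$ together with the injectivity of $C\hookrightarrow M(C)$. The second identification is a routine consequence of the existence of an approximate identity in the $C^*$-algebra $K$. Everything else is bookkeeping with the uniqueness of kernels and cokernels.
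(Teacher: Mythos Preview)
Your proposal is correct and follows exactly the same route as the paper: apply \propref{coker} to get the cokernel $\pre B(B/B_X)_{B/B_X}$ and then \theoremref{ker X} to get its kernel $\pre{B_X}(B_X)_B$, and dually for the coimage. You have simply made explicit the two ideal identifications (that $\ker\phi_{B/B_X}=B_X$ and that $A_K=K$) which the paper leaves implicit in its citations of those results.
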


\begin{proof}
By Proposition~\ref{coker} $\pre B(B/B_X)_{B/B_X}$ is a cokernel of $X$,
and by 
\theoremref{ker X}
$\pre {B_X}(B_X)_B$ is a kernel of $\pre B(B/B_X)_{B/B_X}$.
Thus $\pre {B_X}(B_X)_B$ is a cokernel of a kernel of $X$,
and so is the Schubert image of $X$ by definition.

Very similarly, $\pre {\ker\phi}(\ker\phi)_A$ is a kernel of $X$, and $\pre A(A/\ker\phi)_{A/\ker\phi}$ is a cokernel of $\pre {\ker\phi}(\ker\phi)_A$. Thus, by definition, it is the Schubert coimage of $X$. 
\end{proof}

\begin{prop}
Every Hilbert bimodule $\pre AX_B$ has an image, and in fact it coincides with the Schubert image.
\end{prop}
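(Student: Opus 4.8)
The plan is to verify directly that the Schubert image $u:=\pre{B_X}(B_X)_B$, identified in \theoremref{image thm}, already satisfies the universal property of the ordinary image. Since an image, when it exists, is unique up to isomorphism, this single verification establishes simultaneously that $X$ has an image and that this image coincides with the Schubert image. (The text's remark that an image always factors through the Schubert image then becomes automatic.) Throughout, the decisive use of the hypothesis that $\pre AX_B$ is a \hbm\ is that the dual $\wilde X$ is available.

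The two easy ingredients come first. For the definition of image we need $u$ to be a monomorphism: since $\pre{B_X}(B_X)_B$ is a left-full \hbm\ (its left inner products $\pre{B_X}\langle b,b'\rangle=bb'^{*}$ span $B_X$ because $B_X$ is a $C^*$-algebra), \propref{split mono is left-full} makes it a split monomorphism, hence a monomorphism. That $X$ factors through $u$ follows from \lemref{restrict right} with $C=B_X$: because $B_XB=B_X$, the lemma yields $\pre AX_{B_X}\otimes_{B_X}\pre{B_X}(B_X)_B\cong\pre AX_B$, that is, $[X]=u\circ[\pre AX_{B_X}]$.

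The substance is the universal property. Suppose $v=[\pre JV_B]$ is any monomorphism and $g=[\pre AG_J]$ a morphism with $[X]=v\circ g$, so $X\cong G\otimes_J V$. Here I would exploit the \hbm\ structure: $\wilde X$ is a $B_X$-$A_X$ \ibm, so the left $B$-action on $\wilde X\otimes_A G$ restricts to a nondegenerate $B_X$-action (because $B_X\wilde X=\wilde X$), and I may define a morphism $i:=[\pre{B_X}(\wilde X\otimes_A G)_J]\colon B_X\to J$. Using associativity of the balanced tensor product together with \lemref{X tilde X},
\[
v\circ i=[(\wilde X\otimes_A G)\otimes_J V]=[\wilde X\otimes_A(G\otimes_J V)]=[\wilde X\otimes_A X]=[\pre{B_X}(B_X)_B]=u,
\]
where the left module structure carried through the computation is the restricted $B_X$-action, so that \lemref{X tilde X} produces exactly $u$. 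Uniqueness of $i$ is then immediate from $v$ being a monomorphism. Hence $u$ is an image of $X$, as desired.

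The step I expect to be the main obstacle is precisely the construction and verification of $i$. One must confirm that the left action on $\wilde X\otimes_A G$ genuinely restricts to a nondegenerate $B_X$-action --- this is the exact point where the \hbm\ hypothesis is indispensable, since for a general correspondence the dual $\wilde X$ does not exist --- and one must track the left module structure carefully enough that the identification $\wilde X\otimes_A X\cong B_X$ of \lemref{X tilde X} is read as an isomorphism onto the correspondence $u=\pre{B_X}(B_X)_B$ over $B_X$, rather than onto the $B$-$B$ correspondence $\pre B(B_X)_B$. Everything else is routine bookkeeping with the isomorphisms already furnished by \lemref{restrict right}, \lemref{X tilde X}, and \theoremref{image thm}.
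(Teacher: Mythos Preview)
Your proposal is correct and follows essentially the same approach as the paper: both construct the required factoring morphism into an arbitrary monomorphism $v=[\pre JV_B]$ by tensoring the dual $\wilde X$ against the given factorization, obtaining $\wilde X\otimes_A G\otimes_J V\cong\wilde X\otimes_A X\cong B_X$ via \lemref{X tilde X}. You are somewhat more explicit than the paper in checking the first two clauses of the image definition (that $u$ is a monomorphism and that $X$ factors through it), which the paper leaves tacit; the heart of the argument---the use of the dual bimodule to produce the comparison morphism---is identical.
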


\begin{proof}
Assume that $\pre AX_B$ is isomorphic to $\pre AY{\otimes}_C Z_B$ for a mono\-morph\-ism
$\pre CZ_B$ and a morphism $\pre AY_C$. Since $\pre AX_{B_X}$ is a Hilbert bimodule, there exists a ${B_X}-A$ Hilbert bimodule   $\wilde X$ such that $\wilde X{\otimes}_A X \cong B_X$ as ${B_X}-{B_X}$ correspondences (Lemma~\ref{X tilde X}). Denote $\wilde X{\otimes}_A Y$ by $M$. Then we have 

\[
\pre{B_X}M{\otimes}_{C} Z_B\cong \pre{B_X}\wilde X{\otimes}_A Y{\otimes}_{C} Z_B \cong \pre{B_X}\wilde X{\otimes}_A X_B\cong \pre{B_X}(B_X)_B.
\]
Thus $\pre{B_X}M_B$ is the unique monomorphism making the diagram 
\[
\xymatrix@R+20pt{
A \ar[rr]^-X \ar[dr]^{X'} \ar[ddr]_Y
&&B 
\\
&B_X \ar[ur]^{B_X} \ar@{-->}[d]|(.4)M
\\
&C \ar[uur]_Z
}
\]
commute, which completes the proof.
\end{proof} 

And \theoremref{image thm} in turn makes us ready for exact sequences:

\begin{definition}
A sequence
\[
\xymatrix{
\cdots
\ar[r]
&A \ar[r]^-{[X]}
&B \ar[r]^-{[Y]}
&C \ar[r]
&\cdots
}
\]
of morphisms
in the enchilada category is \emph{exact at $B$} if
the image of $X$ equals the kernel of $Y$,
and is \emph{exact} if it is exact at every node.
\end{definition}

\begin{theorem}\label{short exact}
Let $\pre AX_B$ and $\pre BY_C$ be correspondences.
Then the sequence
\[
\xymatrix{
0\ar[r] &A \ar[r]^-X &B \ar[r]^-Y &C \ar[r] &0
}
\]
is exact in the enchilada category if and only if
$\phi_X$ is injective,
$B_X=\ker\phi_Y$,
and 
$Y$ is full
\(i.e., $C_Y=C$\).
\end{theorem}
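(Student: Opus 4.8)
The plan is to unwind the definition of exactness at each of the three nodes $A$, $B$, $C$, interpreting ``the image of $X$ equals the kernel of $Y$'' as equality of subobjects (i.e.\ the relation $\sim$), and to show that in every case the comparison reduces to comparing two subobjects of the form $\pre II_D$ and $\pre JJ_D$, where $I,J$ are ideals of the node algebra $D$. At node $B$, \theoremref{image thm} gives the Schubert image of $X$ as $\pre{B_X}(B_X)_B$ and \theoremref{ker X} gives the kernel of $Y$ as $\pre{\ker\phi_Y}(\ker\phi_Y)_B$. At node $A$ the incoming arrow is the zero morphism $\pre 0 0_A$, whose Schubert image is $\pre 0 0_A$ (apply \theoremref{image thm} to the zero correspondence, for which the relevant ideal is $0$), while the kernel of $X$ is $\pre{\ker\phi_X}(\ker\phi_X)_A$ by \theoremref{ker X}. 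At node $C$ the image of $Y$ is $\pre{C_Y}(C_Y)_C$, and the kernel of the outgoing zero morphism $\pre C0_0$ is $\pre CC_C=1_C$ (its left homomorphism is the zero map into $\LL(0)=0$, so by \theoremref{ker X} the whole domain $C$ is the kernel). Thus all three exactness conditions are instances of ``$\pre II_D\sim\pre JJ_D$''.

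The key lemma I would isolate is this: for ideals $I,J$ of a $C^*$-algebra $D$, the subobjects $\pre II_D$ and $\pre JJ_D$ of $D$ are equal if and only if $I=J$. Each such correspondence is a left-full \hbm, hence a kernel by \corref{left-full is kernel} and in particular a monomorphism, so the standard subobject argument shows $\pre II_D\sim\pre JJ_D$ precisely when there is an isomorphism $u\colon I\to J$ in the enchilada category, i.e.\ an \ibm\ $U$, with $\pre II_D\cong U\otimes_J\pre JJ_D$. The crux is then to produce an invariant that detects the ideal. I would use the closed span of the right-hand inner products: for a correspondence $\pre DZ$ with right coefficient algebra $D$, set $D_Z=\clspn\<Z,Z\>_D$, and observe that precomposing with a right-full correspondence leaves $D_Z$ unchanged, since $\<w\otimes z,w'\otimes z'\>_D=\<z,\phi_Z(\<w,w'\>_J)z'\>_D$ and right-fullness together with nondegeneracy give $\clspn\phi_Z(J)Z=Z$. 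Applying this with the \ibm\ $U$ yields $I=D_{\pre II_D}=D_{U\otimes_J\pre JJ_D}=D_{\pre JJ_D}=J$; the reverse implication is trivial.

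With the lemma in hand, the three node conditions read: exactness at $A$ is ``$0=\ker\phi_X$'', i.e.\ $\phi_X$ injective; exactness at $B$ is ``$B_X=\ker\phi_Y$''; and exactness at $C$ is ``$C_Y=C$'', i.e.\ $Y$ is full. The end-node statements are degenerate special cases of the lemma (comparison against the zero ideal and against all of $D$, respectively), and conjoining the three gives exactly the stated characterization.

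I expect the main obstacle to be the key lemma, specifically verifying that the invariant $D_Z$ is genuinely preserved under precomposition by an isomorphism and confirming that subobject equivalence really does supply such an isomorphism $u\colon I\to J$. Everything else is bookkeeping once \theoremref{image thm} and \theoremref{ker X} are invoked; the only further subtlety at the end nodes is to check that the image of the incoming zero morphism at $A$ and the kernel of the outgoing zero morphism at $C$ are computed correctly, which follows directly from the zero-object remarks together with \theoremref{image thm} and \theoremref{ker X}.
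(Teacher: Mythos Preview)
Your approach matches the paper's: compute the Schubert image and kernel at each node via \theoremref{image thm} and \theoremref{ker X} as correspondences of the form $\pre II_D$ for appropriate ideals $I$, then equate them. The paper's proof simply asserts that equality of the subobjects $\pre II_D$ and $\pre JJ_D$ is equivalent to $I=J$, whereas you supply a proof of this via the invariant $D_Z=\clspn\<Z,Z\>_D$ and its preservation under precomposition by an \ibm; your argument for that lemma is correct, so your version is a slightly more rigorous rendering of the same proof.
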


\begin{proof}
Since $A_0=\<0,0\>=0$, 
the morphism
$0:0\to A$ is an image of $0:0\to A$.
On the other hand,
$\ker\phi_X:\ker\phi_X\to A$ is a kernel of $X:A\to B$.
Thus the sequence is exact at $A$ if and only if $\phi_X$ is injective.

Next,
$C_Y:C_Y\to C$ is an image of $Y:B\to C$.
On the other hand, the homomorphism $\phi_0:C\to \LL(0)$ is 0, so has kernel $C$.
Thus $C:C\to C$ is a kernel of $0:C\to 0$.
Therefore the sequence is exact at $C$ if and only if
$C_Y=C$.

Finally,
$B_X:B_X\to C$ is an image of $X:A\to B$,
and
$\ker\phi_Y:\ker\phi_Y\to B$ is a kernel of $Y:B\to C$,
so the sequence is exact at $B$
if and only if $B_X=\ker\phi_Y$.
\end{proof}

\begin{remark}\label{failure}
As we mentioned in the introduction,
our primary motivation for investigating exact sequences in the enchilada category
was to determine whether the reduced-crossed-product functor is exact in the enchilada categories,
which would obviously be relevant for the Baum-Connes conjecture.
But now we will show that it is not exact.
Let
\[
\xymatrix{
0 \ar[r] &A \ar[r]^-X &B \ar[r]^-Y &C \ar[r] &0
}
\]
be a short exact sequence of correspondences,
and let $G$ be a locally compact group.
Further let
$\alpha:G\to \aut A$,
$\beta:G\to \aut B$,
$\gamma:G\to \aut C$,
$\zeta:G\to \aut X$,
and
$\eta:G\to \aut Y$
be actions
as in \cite[Section~3.1.1]{enchilada}.
Then we can form the reduced-crossed-product correspondences
\[
\pre {A\rtimes_{\alpha,r} G}(X\rtimes_{\zeta,r} G)_{B\rtimes_{\beta,r} G}
\midtext{and}
\pre {B\rtimes_{\beta,r} G}(Y\rtimes_{\eta,r} G)_{C\rtimes_{\gamma,r} G}.
\]
Moreover,
there are actions
\[
\mu:G\to \aut \KK(X)
\midtext{and}
\nu:G\to \aut \KK(Y)
\]
such that
\begin{align*}
\KK(X\rtimes_{\zeta,r} G)&=\KK(X)\rtimes_{\mu,r} G
\\
\phi_{X\rtimes_{\zeta,r} G}&=\phi_X\rtimes_r G
\\
\KK(Y\rtimes_{\eta,r} G)&=\KK(Y)\rtimes_{\nu,r} G
\\
\phi_{Y\rtimes_{\eta,r} G}&=\phi_Y\rtimes_r G.
\end{align*}
Now consider the following sequence
\[
\xymatrix@C+10pt{
0 \ar[r]
&A\rtimes_{\alpha,r} G \ar[r]^-{X\rtimes_{\zeta,r} G}
&B\rtimes_{\beta,r} G \ar[r]^-{Y\rtimes_{\eta,r} G}
&C\rtimes_{\gamma,r} G \ar[r]
&0
}
\]
One of the required conditions to make this sequence exact is
\[
(B{\rtimes}_{\beta,r} G )_{X{\rtimes}_{\zeta,r} G} = \ker \phi_{Y\rtimes_{\eta,r} G}.
\]
Now, by \cite[Proposition~3.2]{enchilada} we have $(B{\rtimes}_{\beta,r} G )_{X{\rtimes}_{\zeta,r} G}=B_X{\rtimes}_{\beta,r} G$.
Since $\phi_{Y\rtimes_{\eta,r} G}=\phi_Y\rtimes_r G$
and $B_X=\ker\phi_Y$,
for exactness we must have
\[
(\ker\phi_X)\rtimes_{\alpha,r} G
=
\ker(\phi_X\rtimes_r G).
\]
However
this equality
is not true 
in general if $G$ is not exact.
So, the reduced-crossed-product functor does not preserve exactness in the enchilada categories if $G$ is a nonexact group.
\end{remark}

\begin{remark}
Although it is not directly relevant for our original investigation regarding the Baum-Connes conjecture,
we will now point out that
the full-crossed-product functor is 
also not
exact in the enchilada categories.

Again let
\[
\xymatrix{
0\ar[r]&A\ar[r]^-X&B\ar[r]^-Y&C\ar[r]&0
}
\]
be a short exact sequence of correspondences,
carrying compatible actions of $G$.
For exactness, we would need the sequence
\begin{equation}\label{crossed product sequence}
\xymatrix@C+10pt{
0\ar[r]
&A\rtimes G\ar[r]^-{X\rtimes G}
&B\rtimes G\ar[r]^-{Y\rtimes G}
&C\rtimes G\ar[r]&0
}
\end{equation}
to be exact.
To apply
Theorem~\ref{short exact},
we would need:
\begin{itemize}
\item
$\phi_{X\rtimes G}$ to be injective,

\item
$(B\rtimes G)_{X\rtimes G}=\ker\phi_{Y\rtimes G}$,
and

\item
$(C\rtimes G)_{Y\rtimes G}=C\rtimes G$.
\end{itemize}
This time, it
may fail to be exact at $A\rtimes G$,
i.e., $\phi_{X\rtimes G}$ need not be injective.
To explain all this,
note that, as for reduced crossed products,
\begin{align*}
\KK(X\rtimes G)&=\KK(X)\rtimes G
\\
\KK(Y\rtimes G)&=\KK(Y)\rtimes G
\\
\phi_{X\rtimes G}&=\phi_X\rtimes G
\\
\phi_{Y\rtimes G}&=\phi_Y\rtimes G.
\end{align*}
Then we have
\begin{align*}
(B\rtimes G)_{X\rtimes G}
&=B_X\rtimes G
\\&=(\ker\phi_Y)\rtimes G\righttext{(since $B_X=\ker\phi_Y$)}
\\&=\ker(\phi_Y\rtimes G)
\\&=\ker \phi_{Y\rtimes G},
\end{align*}
so \eqref{crossed product sequence} is exact in the middle.

Next,
\begin{align*}
(C\rtimes G)_{Y\rtimes G}
&=C_Y\rtimes G\righttext{(since $C_Y=C$)}
\\&=C\rtimes G,
\end{align*}
so \eqref{crossed product sequence} is exact at $C\rtimes G$.

To see how exactness at $A\rtimes G$ might fail,
we formulate a strategy for finding a counterexample:
we will take a correspondence $X$ 
arising from 
a homomorphism, 
and 
we will let
all the actions 
be trivial.
More precisely,
we will have:
\begin{itemize}
\item
an injection $\pi$ of $A$ into an ideal $D$ of $B$,
and

\item
the associated $A-B$ correspondence $X=AB$.
\end{itemize}
Since all the actions are trivial, the crossed products are just the maximal tensor products with $C^*(G)$.
In particular,
the correspondence
$\pre{A\rtimes G}(X\rtimes G)_{B\rtimes G}$
is the tensor product
\[
\pre{A\xm C^*(G)}(X\xm C^*(G))_{B\xm C^*(G)}.
\]
It follows from the standard theory of $C^*$-correspondences
that this is isomorphic to
\[
\pre AX_B\xm C^*(G).
\]
In particular, the left-module homomorphism
$\phi_{X\rtimes G}$ becomes
the tensor product
\[
\pi\xm\id:A\xm C^*(G)\to B\xm C^*(G).
\]
If we take any group $G$ for which $C^*(G)$ is nonnuclear
(for example, the free group on 2 generators),
then it is a subtle fact from $C^*$-algebra theory
\cite[Theorem~IV.3.1.12]{blackadar}
that there exist a $C^*$-algebra
$D$ and a $C^*$-subalgebra $A$
such that
the associated homomorphism
$\pi\xm\id$
is noninjective.
Then for any full correspondence $\pre BY_C$
with $\ker\phi_Y=D$
we get a counterexample.
\end{remark}

\section{Further properties of the enchilada category}\label{further}

In Corollary~\ref{left-full is kernel} we characterized kernels in the enchilada category as the left-full \hbm s.
In view of other results that come in dual pairs, it is tempting to suspect that cokernels are precisely the right-full \hbm s.
On the other hand, the enchilada category is decidedly left-challenged, so it is not surprising that it could have direction-related properties
that are not satisfied when the
directions are reversed.
Indeed:
\begin{prop}\label{right-full is not cokernel}
In the enchilada category, a cokernel need not be a right-full \hbm,
and conversely a right-full \hbm\ need not be a cokernel.
\end{prop}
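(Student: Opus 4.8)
The plan is to exploit the explicit description of cokernels from \propref{coker}: up to isomorphism, the cokernels with domain $B$ are precisely the quotient correspondences $\pre B(B/I)_{B/I}$, as $I$ ranges over the ideals of $B$ (any such correspondence is the cokernel of, for example, $\pre I I_B$, which has $B_X=I$). Every one of these is right-full, since the identity correspondence over the $C^*$-algebra $B/I$ satisfies $\clspn\<B/I,B/I\>_{B/I}=B/I$. So the whole proposition comes down to understanding when such a quotient correspondence admits a compatible left inner product, together with producing a right-full \hbm\ that is not of quotient type at all.

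For the first assertion I would first establish the key point: \emph{$\pre B(B/I)_{B/I}$ is a \hbm\ if and only if $I$ is a complemented (direct-summand) ideal of $B$.} The forward direction is where the content lies. If a left inner product exists, then $A_X=\clspn\pre B\<X,X\>$ is an ideal of $B$ carried isomorphically onto $\KK(X)$ by $\phi_X$. But here $\phi_X$ is just the quotient map $q:B\to M(B/I)$ (so $\ker\phi_X=I$) and $\KK(X)=\KK(B/I)=B/I$, so $q|_{A_X}:A_X\to B/I$ is an isomorphism. Surjectivity gives $A_X+I=B$ and injectivity gives $A_X\cap I=0$, whence $B=A_X\oplus I$ and $I$ is complemented. (The converse, which I will not need, transports the canonical $\KK$-valued inner product along such a complement.) With this in hand I would take any $B$ with a non-complemented ideal --- for instance $B=C([0,1])$ and $I=\{f:f(0)=0\}$, where $M(B)=B$ has no projections other than $0$ and $1$ --- so that the cokernel $\pre B(B/I)_{B/I}\cong\pre B\C_\C$ is right-full but is not a \hbm, hence not a right-full \hbm.

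For the converse assertion I would use the same characterization in reverse, together with the fact that \ibm s are exactly the invertible morphisms. Take any \ibm\ that is not an identity morphism --- for instance the standard $\C$--$M_2(\C)$ Morita equivalence $\pre{\C}X_{M_2(\C)}$ with $X=\C^2$. It is a right-full \hbm\ by definition. It cannot be a cokernel: by \propref{coker} any cokernel with domain $\C$ has the form $\pre{\C}(\C/I)_{\C/I}$ with $I\in\{0,\C\}$, so its codomain is $\C$ or $0$, never $M_2(\C)$. (More abstractly, if an \ibm\ were a cokernel $\pre A(A/I)_{A/I}$, then left-fullness would force $\phi_X=q$ to be injective, hence $I=0$ and $X\cong\pre AA_A$; so the only morphisms that are simultaneously \ibm s and cokernels are identities.)

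The main obstacle is the forward direction of the characterization in the second paragraph --- pinning down that a quotient correspondence admits a left \hbm\ structure only when the ideal splits off as a direct summand. Once that rigidity is in place, the rest is simply a matter of selecting the two examples.
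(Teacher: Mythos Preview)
Your first half is correct and sharper than the paper's, which merely observes that $B/J$ need not embed as an ideal of $B$; you prove the full complemented-ideal criterion and name a specific $B$ and $J$.

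The converse, however, has a genuine gap. You assert that ``by \propref{coker} any cokernel with domain $\C$ has the form $\pre{\C}(\C/I)_{\C/I}$'', but \propref{coker} only exhibits \emph{one} cokernel; post-composing with any isomorphism of the enchilada category (an imprimitivity bimodule) yields another. Since $\pre\C\C_\C$ is a cokernel of $\pre B0_\C$, so is $\pre\C\C_\C\otimes_\C(\C^2)_{M_2(\C)}\cong\pre\C(\C^2)_{M_2(\C)}$ --- your example \emph{is} a cokernel. In fact every isomorphism in a category with zero object is a cokernel of the zero morphism into its domain, so no imprimitivity bimodule can witness this direction; your parenthetical argument makes the same conflation of ``is a cokernel'' with ``is literally $\pre A(A/I)_{A/I}$''. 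It is worth noting that the paper's own argument here has the very same flaw: it claims $\pre AA_A$ is not a cokernel by invoking ``$0\colon A\to 0$ is a cokernel of $0\colon B\to A$'', but the cokernel of a zero morphism is the identity on its codomain, and $\pre AA_A$ is indeed a cokernel. A correct witness can be extracted from your first-half analysis run on the left side: for a non-complemented ideal $J$ of $A$ (e.g.\ $A=C([0,1])$, $J=C_0((0,1])$) the correspondence $\pre AJ_J$ is a right-full Hilbert bimodule with $A_X=J$ and $\ker\phi_X=0$; if it were a cokernel then by uniqueness $\pre AJ_J\cong\pre A(A/I)\otimes_{A/I}Z_J$ for some ideal $I$ and some imprimitivity bimodule $Z$, forcing $I=\ker\phi_X=0$ and hence making $\pre AJ_J$ itself an imprimitivity bimodule, contrary to $A_X=J\ne A$.
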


\begin{proof}
First, it follows from Proposition~\ref{coker} that a quotient map $B\to B/J$ is a cokernel in the enchilada category.
However, it need not be a \hbm, since the quotient $B/J$ need not be isomorphic to an ideal of $B$.

Conversely, let $A$ be a nonzero $C^*$-algebra.
Then $\pre AA_A$ is a right-full \hbm,
and we will show that it is not a cokernel.
Arguing by contradiction, suppose $\pre AA_A$ is a cokernel of $\pre BX_A$.
Then
\[
X\otimes_AA=0,
\]
so $A_X\subset \ker \phi_A=0$,
and hence $X=0$.
Now, 
$0:A\to 0$ is
also a cokernel of $0:B\to A$,
so $A$ and 0 are isomorphic in the enchilada category, and hence are Morita equivalent.
Therefore $A=0$,
which is a contradiction.
\end{proof}

\begin{prop}\label{epi is full}
If a correspondence $\pre AX_B$ is an epimorphism in the enchilada category, then $X$ is full, i.e., $B_X=B$.
\end{prop}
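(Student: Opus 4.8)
The plan is to prove the contrapositive through the defining property of an epimorphism: assuming $B_X\neq B$, I will exhibit two \emph{distinct} morphisms out of $B$ that become equal after precomposition with $[X]$, contradicting the hypothesis that $[X]$ is an epimorphism. This is precisely the dual of the argument in \propref{mono inj}, where the ideal $\ker\phi_X$ was used on the left to obstruct being a monomorphism; here the ideal $J:=B_X$ plays the mirror-image role on the right.

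First I would assume, toward a contradiction, that $J:=B_X$ is a proper ideal of $B$, so that the quotient $B/J$ is a \emph{nonzero} $C^*$-algebra. The quotient map $B\to B/J$ gives the correspondence $\pre B(B/J)_{B/J}$ (as recalled in the discussion preceding \propref{0 tensor}), and the kernel of its left-module homomorphism $\phi_{B/J}$ is exactly $J$. Since $B_X=J\subseteq\ker\phi_{B/J}$, \propref{0 tensor} gives $X\otimes_B(B/J)=0$. On the other hand, the zero correspondence $\pre B0_{B/J}$ trivially satisfies $X\otimes_B 0=0$.

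Next I would compute the two compositions. With the enchilada composition $[\pre BY_C]\circ[\pre AX_B]=[X\otimes_B Y]$, both $\pre B(B/J)_{B/J}$ and $\pre B0_{B/J}$, regarded as morphisms $B\to B/J$, satisfy
\[
[(B/J)_{B/J}]\circ[X]=0=[0_{B/J}]\circ[X].
\]
Because $[X]$ is assumed to be an epimorphism, this forces $[(B/J)_{B/J}]=[0_{B/J}]$, i.e.\ the correspondence $\pre B(B/J)_{B/J}$ is isomorphic to the zero correspondence. But an isomorphism of correspondences is in particular a linear bijection, so this would force $B/J=0$, contradicting $J\subsetneq B$. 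Hence $B_X=B$.

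The one point requiring care---the ``main obstacle,'' though it is minor---is to confirm that $[(B/J)_{B/J}]$ and $[0_{B/J}]$ really are distinct morphisms in the enchilada category, so that the epimorphism hypothesis is genuinely violated; this reduces to the observation that a nonzero correspondence cannot be isomorphic to the zero one, and it is exactly here that $J\subsetneq B$ (equivalently $B/J\neq 0$) is used. I should also note in passing that $\pre B(B/J)_{B/J}$ is nondegenerate, since the image of $B$ in $B/J$ acts by multiplication and spans a dense subalgebra, so it is a legitimate morphism of the category.
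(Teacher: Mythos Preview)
Your proof is correct and follows essentially the same route as the paper's: both argue by contradiction using the quotient correspondence $\pre B(B/B_X)_{B/B_X}$ and the zero correspondence as two distinct morphisms $B\to B/B_X$ that are equalized by $X$. Your write-up simply spells out in more detail why $X\otimes_B(B/B_X)=0$ (via \propref{0 tensor}) and why the two morphisms are genuinely distinct.
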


\begin{proof}
Suppose that $B_X\ne B$.
Then
\[
X\otimes_B \pre B(B/B_X)_{B/B_X}=0=X\otimes_B 0,
\]
but $\pre B(B/B_X)_{B/B_X}\not\cong 0$.
\end{proof}

\propref{epi is full} can be alternatively restated as follows:
$\pre AX_B$ if is an epimorphism then its image is the identity morphism $\pre BB_B$.
In many categories,
there is a converse:
a morphism $f:A\to B$ whose image is the identity morphism $1_B$ must be an epimorphism --- informally, surjections are epimorphisms.
But not in the enchilada category:

\begin{prop}\label{sur not epi}
The enchilada category has a morphism $X:A\to B$
that is not an epimorphism
but whose image is $\pre BB_B$.
\end{prop}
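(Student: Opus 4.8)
The plan is to produce the required morphism as a full correspondence that nevertheless fails to be an epimorphism; the whole content of the statement is that \propref{epi is full} has no converse. First I would make the reduction explicit: by \theoremref{image thm} the Schubert image of $\pre AX_B$ is $\pre{B_X}(B_X)_B$, and this coincides with the identity morphism $\pre BB_B$ exactly when $B_X=B$, i.e.\ when $X$ is full. So it suffices to exhibit one full correspondence that is not an epimorphism.

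For the example I would take the smallest possible objects, $A=B=\C$, and let $X=\pre{\C}H_{\C}$, where $H$ is a separable infinite-dimensional Hilbert space regarded as a $\C$--$\C$ correspondence with $\C$ acting on the left by scalar multiplication. This is a nondegenerate correspondence, and since $\clspn\<H,H\>_{\C}=\C$ it is full, so by the reduction above its image is $\pre BB_B=\pre{\C}\C_{\C}$, as required.

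It then remains to check that $X$ is not an epimorphism, which amounts to finding $\C$--$\C$ correspondences $Y,Z$ --- that is, Hilbert spaces with $\C$ acting by scalars --- with $X\otimes_{\C}Y\cong X\otimes_{\C}Z$ but $Y\not\cong Z$. I would take $Y=\pre{\C}\C_{\C}$ and $Z=\pre{\C}{\C^2}_{\C}$. Tensoring a $\C$--$\C$ correspondence against $H$ just forms the Hilbert-space tensor product (with the scalar actions), so $X\otimes_{\C}Y\cong H$ and $X\otimes_{\C}Z\cong H\otimes\C^2\cong H$ as correspondences, while $Y=\C\not\cong\C^2=Z$ by a dimension count. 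Hence $Y\circ X=Z\circ X$ but $Y\neq Z$, so $X$ is not an epimorphism.

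The conceptual heart --- the one place needing a moment's thought rather than routine verification --- is recognizing that the epimorphism property is precisely a cancellation property after tensoring, and that choosing $X$ infinite-dimensional trivializes the search for a counterexample: tensoring against $H$ absorbs finite multiplicities, so $X\otimes_{\C}\C\cong H\cong X\otimes_{\C}\C^2$ holds for the elementary reason that $\aleph_0=2\aleph_0$, even though $\C\not\cong\C^2$. I expect no genuine obstacle beyond writing out the two tensor-product identifications explicitly \emph{as correspondences} (matching both the right $\C$-module and left scalar actions), not merely as Hilbert spaces.
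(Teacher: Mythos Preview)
Your argument is correct and complete: the reduction via \theoremref{image thm} is right, $H$ is full as a $\C$--$\C$ correspondence, the balanced tensor product over $\C$ really is the Hilbert-space tensor product, and the absorption $H\otimes\C^2\cong H\oplus H\cong H$ holds as $\C$--$\C$ correspondences since both sides are separable infinite-dimensional with scalar actions on each side.

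Your example is genuinely different from the paper's. The paper takes $A=\C$, $B=\C^2$, with $X$ the correspondence induced by the diagonal embedding $a\mapsto(a,a)$, and then uses for $Y,Z$ the two coordinate projections $\C^2\to\C$; these give non-isomorphic $\C^2$--$\C$ correspondences (no unitary in $M(\C)=\C$ intertwines the two homomorphisms), yet both compose with the diagonal to give $\pre{\C}\C_\C$. So the paper's counterexample lives entirely in finite dimensions and uses only correspondences arising from homomorphisms, which has the side benefit of showing that even such ``classical'' morphisms can fail to be epimorphisms. Your route keeps the coefficient algebras as simple as possible ($A=B=C=\C$) at the cost of an infinite-dimensional $X$; the mechanism is cardinal absorption rather than the symmetry of the diagonal. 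Both are short and self-contained; the paper's has the mild advantage of staying finite-dimensional, while yours makes the failure of cancellation completely transparent.
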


\begin{proof}
We must find correspondences $\pre AX_B$, 
$\pre BY_C$, and $\pre BZ_C$
such that
\begin{enumerate}
\item
$B_X=B$,

\item
$X\otimes_BY\cong X\otimes_BZ$,
and

\item
$Y\not\cong Z$.
\end{enumerate}
We let
$X$ be the $\CC-\CC^2$ correspondence
associated to the homomorphism $a\mapsto (a,a)$,
and
for $Y$ and $Z$ we take the $\CC^2-\CC$ correspondences
associated to the homomorphisms
$(a,b)\mapsto a$
and
$(a,b)\mapsto b$, respectively.

It follows from Lemma~\ref{compose left}
that
\[
X\otimes_BY\cong \pre \CC\CC_\CC\cong X\otimes_BZ.
\]
On the other hand,
the $\CC^2-\CC$ correspondences $Y$ and $Z$ are
not isomorphic,
since
by
\cite[Proposition~2.3]{taco} two $B-C$ correspondences coming from nondegenerate homomorphisms $\pi,\rho:B\to M(C)$
are isomorphic if and only if there is a unitary $u\in M(C)$ such that $\ad u\circ\pi=\rho$.
\end{proof}

In spite of \propref{sur not epi}, there is a weaker result,
namely \propref{epi}.

\begin{corollary}\label{no equalizers}
The enchilada category does not have equalizers.
\end{corollary}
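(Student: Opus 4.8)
The plan is to establish the negative result by exhibiting a single pair of parallel morphisms that admits no equalizer. Before choosing the pair I would record a structural reformulation. In the enchilada category the $C^*$-direct sum is the categorical product: a nondegenerate correspondence from $D$ to $A\oplus B$ splits along the two central summands into a pair of correspondences, one from $D$ to $A$ and one from $D$ to $B$, so that $\operatorname{Hom}(D,A\oplus B)\cong\operatorname{Hom}(D,A)\times\operatorname{Hom}(D,B)$ naturally in $D$. Under this identification the diagonal $\C\to\C\oplus\C=\C^2$ is precisely the correspondence $X$ of \propref{sur not epi} --- the one that is full but not an epimorphism --- and the equalizer of any pair $f,g\colon A\to\C$ is exactly the pullback of $\langle f,g\rangle\colon A\to\C^2$ along $X$. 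Hence it suffices to produce one pair for which this pullback does not exist.

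For the pair I would take $f,g\colon\C^2\to\C$ to be the correspondences on which $\C^2$ acts with multiplicities $(1,2)$ and $(2,1)$, assembled from the same elementary algebras used in \propref{sur not epi}. Writing a test morphism $k\colon\C\to\C^2$ as a pair of dimensions $(p,q)$, a short computation of the balanced tensor products gives $\dim(f\circ k)=p+2q$ and $\dim(g\circ k)=2p+q$, so $k$ equalizes $f$ and $g$ exactly when $p+2q=2p+q$ as cardinals, that is, when either $p=q<\aleph_0$ or $\max(p,q)\ge\aleph_0$. Thus the equalizing morphisms out of $\C$ form the sub-monoid
\[
S=\{(p,p):p<\aleph_0\}\cup\{(p,q):\max(p,q)\ge\aleph_0\}
\]
of $\operatorname{Hom}(\C,\C^2)$. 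Its decisive feature is that it is \emph{correlated} on the finite part (only diagonal pairs occur) but fully \emph{decorrelated} once a coordinate becomes infinite.

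Next I would suppose an equalizer $e\colon E\to\C^2$ existed and exploit its universal property over the test object $\C$. Composition with $e$ sends a morphism $p\colon\C\to E$ to $p\otimes_E\mathcal H_0$, where $\mathcal H_0$ denotes the underlying correspondence of $e$; the existence-and-uniqueness clause then says precisely that $p\mapsto p\otimes_E\mathcal H_0$ is a bijection from the isomorphism classes of Hilbert $E$-modules onto $S$. As this assignment is additive for $\oplus$, it is an isomorphism of commutative monoids from $\{\text{Hilbert }E\text{-modules}\}$ onto $(S,+)$ (and $E\neq0$, since no nonzero element of $S$ factors through the zero object). The contradiction I would chase lives entirely in $(S,+)$: its only finite atom is $s_0=(1,1)$, and although $s_0$ is not a summand of either coordinate element $(\aleph_0,0)$ or $(0,\aleph_0)$, it is absorbed by their sum $(\aleph_0,\aleph_0)$. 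Transported across the isomorphism, this asserts the existence of a smallest nonzero Hilbert $E$-module $M_0$ that is a direct summand of $P\oplus Q$ but of neither $P$ nor $Q$, while at the same time $M_0\oplus M_0\not\cong M_0$; I would derive a contradiction by showing that this configuration is impossible for the monoid of Hilbert modules over any single $C^*$-algebra.

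The main obstacle is exactly this last step: ruling out \emph{every} candidate algebra $E$, not merely the finite-dimensional ones. The enchilada category is extremely permissive --- over a properly infinite algebra such as $\mathcal O_2$ every nonzero Hilbert module absorbs its own direct double, so the most naive pairs (for instance $1_\C$ against the rank-two correspondence on $\C\to\C$) seem to acquire equalizers after all once $E$ is enlarged to such an algebra. This is why the pair must be engineered so that $S$ keeps a rigid, non-absorbing finite atom: the crux is to prove that no $C^*$-algebra's monoid of Hilbert modules, under $\oplus$ and compatibly with a fixed fibre $\mathcal H_0$, can simultaneously carry a correlated finite part and the direct-sum absorption forced by the infinite elements. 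Making this non-representability of $(S,+)$ rigorous is where the real difficulty lies.
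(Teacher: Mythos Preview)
Your proposal has a genuine gap, and you identify it yourself: the entire argument rests on showing that the monoid $(S,+)$ cannot be realized as the monoid of isomorphism classes of Hilbert $E$-modules for \emph{any} $C^*$-algebra $E$, and you do not carry this out. The combinatorial feature you isolate---a non-idempotent atom $(1,1)$ that is absorbed by $(\aleph_0,0)+(0,\aleph_0)$ but is a summand of neither term---is suggestive, but it is far from clear that it is forbidden in every Hilbert-module monoid. The class of $C^*$-algebras is vast, and you would need a structural theorem about such monoids that you neither state nor prove. Until that step is executed, what you have is a strategy, not a proof. (As a smaller point, the claim that $\oplus$ is the categorical product in the enchilada category is plausible but is not established in the paper and would also need to be justified.)

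The paper's route is entirely different and dramatically shorter. It does not attempt to analyse any particular equalizer diagram. Instead it leverages \propref{sur not epi}, which already supplies a full correspondence $\pre AX_B$ (so $B_X=B$) that is \emph{not} an epimorphism. Fullness forces the image of $X$ to be the identity $\pre BB_B$, and the image factorization is the trivial one $X\cong X\otimes_B\pre BB_B$. The paper then invokes a standard result from Mitchell (\cite[Proposition~I.10.1]{mitchellcategories}): in a category with equalizers, the first factor in an image factorization is an epimorphism. Applied here, that would force $X$ itself to be an epimorphism, contradicting its construction. Thus the failure of equalizers is deduced in one stroke from the existence of a single ``surjective but not epi'' morphism, with no case analysis over candidate equalizing objects.
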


\begin{proof}
By \propref{sur not epi}, we can choose a correspondence $\pre AX_B$ such that $B_X=B$ but $X$ is not an epimorphism in the enchilada category.
Since $B_X=B$, the image of $X$ is the correspondence $\pre BB_B$.
Thus $X$ factors through its image as follows:
\[
X\cong X\otimes_B \pre BB_B.
\]
Therefore, by \cite[Proposition~I.10.1]{mitchellcategories},
if the enchilada category had equalizers then $X$ would have to be an epimorphism.
\end{proof}

Despite having both kernels and cokernels,
the enchilada category is not abelian.
In fact:

\begin{corollary}\label{not additive}
The enchilada category is not additive.
\end{corollary}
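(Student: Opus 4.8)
The plan is to show the enchilada category is not additive by exhibiting a failure of one of the defining conditions of an additive category. Recall that an additive category must be preadditive (each hom-set carries an abelian group structure under which composition is bilinear) and must possess a zero object together with finite biproducts. My strategy is to attack preadditivity directly, since the remarks earlier in the excerpt already indicate that morphisms in the enchilada category ``are not mappings'' and that the category is ``not even preadditive.'' The cleanest route is to find two objects $A,B$ for which the hom-set of isomorphism classes of $A$--$B$ correspondences cannot be given any abelian group structure making composition bilinear; the obstruction I would exploit is that bilinearity forces the zero morphism $0_{A,B}$ to be the additive identity, so any candidate group operation must have the $0$ correspondence as its neutral element.

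First I would pin down the hom-sets concretely in a small example. Taking $A=\C$, the $\C$--$B$ correspondences are just the Hilbert $B$-modules (the left action of $\C$ being scalar multiplication), up to isomorphism. For $B=\C$ these are the finite-dimensional and separable Hilbert spaces, classified up to isomorphism by their dimension (a cardinal, including $0$); so the hom-set $\CC(\C,\C)$ is in bijection with the set of cardinals $\{0,1,2,\dots,\aleph_0,\dots\}$, with composition $X\otimes_\C Y$ corresponding to the \emph{product} of dimensions. The main obstacle, and the crux of the argument, is to show that this monoid of cardinals under composition (multiplication) is incompatible with any abelian group structure satisfying bilinearity. Here is the key step: in a preadditive category, for the identity object one has a ring structure on the endomorphism monoid, and composition distributes over addition. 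Since the identity morphism $1_\C$ is the one-dimensional Hilbert space and composition is multiplication of dimensions, $1_\C$ would have to be the multiplicative unit of a ring whose underlying additive group is abelian; but the multiplicative monoid $(\{0,1,2,\dots\},\cdot)$ has no cancellation failure of the right kind to embed as the multiplicative structure of a ring unless it is trivial. Concretely, bilinearity would force $0_{\C,\C}$ (the zero-dimensional space) to be the additive identity, and then $n\cdot 0 = 0$ for all $n$, which is automatic; the real contradiction comes from the fact that in a ring the additive group generated by $1$ must make $\{1,2,\dots\}$ closed under an abelian group operation, yet these dimensions admit no inverses and no consistent subtraction compatible with multiplicative distributivity.

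To make this rigorous rather than hand-waving, I would instead invoke the corollaries already established, which give a slicker proof. The cleanest argument runs through \corref{no equalizers}: it is a standard fact of category theory that every additive category has kernels that are equalizers of the relevant pairs, and more to the point every preadditive category with finite products has all finite limits built from biproducts and kernels, in particular equalizers (the equalizer of $f,g$ being the kernel of $f-g$). Thus I would argue by contraposition: if the enchilada category were additive, then in particular it would be preadditive with a zero object, and the equalizer of any pair $f,g\colon A\to B$ would exist, computed as the kernel of the difference $f-g\colon A\to B$ (using that kernels exist by \corref{left-full is kernel}). This directly contradicts \corref{no equalizers}, which asserts that the enchilada category has no equalizers.

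Therefore the plan is to prove \corref{not additive} as a one-line consequence: were the category additive, equalizers would exist as kernels of differences, contradicting the already-proven \corref{no equalizers}. The main obstacle is simply to state correctly the standard implication ``additive (hence preadditive with kernels) $\Rightarrow$ has equalizers,'' citing that the equalizer of $f,g$ is the kernel of $f-g$; once that implication is invoked, the contradiction with \corref{no equalizers} is immediate and no correspondence-level computation is required. I expect this to be the intended short proof, and I would present it in preference to the hands-on cardinal-arithmetic analysis, keeping the latter only as motivation.
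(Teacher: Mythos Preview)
Your proposal is correct and lands on essentially the same proof as the paper: the paper's argument is the single line ``Every additive category with kernels has equalizers,'' which is exactly the contraposition you describe (if the enchilada category were additive, equalizers would exist as kernels of $f-g$, contradicting \corref{no equalizers}). The cardinal-arithmetic detour is unnecessary, as you yourself conclude; the one-line route via \corref{no equalizers} and the existence of kernels (established in \theoremref{ker X}, not \corref{left-full is kernel}) is precisely what the paper does.
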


\begin{proof}
Every additive category with kernels has equalizers.
\end{proof}

Direct sum is a binary operation on isomorphism classes of $A-B$ correspondences. However,  $([\pre AX_B],\bigoplus)$ does not have a group structure since direct sum is not cancellative (i.e., we can have $X\oplus Y\cong X\oplus Z$ but $Y\not\cong Z$). Therefore, the enchilada category is not even preadditive.

\propref{split mono is  left-full} and  \propref{hb} have a dual counterparts:

\begin{prop}\label{split epi}
An $A-B$ correspondence $X$ is a right-full \hbm\ if and only if $X$ is a split epimorphism in the enchilada category.
\end{prop}

\begin{proof}
If $X$ is a right-full \hbm,
then, similarly to the proof of Proposition~\ref{split mono is left-full},
$\wilde X$ is a right inverse of $X$,
so $X$ is a split epimorphism.
\end{proof}
 
 \begin{prop}
The Enchilada category has a split epimorphism that is not the isomorphism class of a Hilbert bimodule. 
\end{prop}

\begin{proof}

Let $\KK=\KK(H)$ for an infinite-dimensional Hilbert space $H$,
and let $\wilde\KK$ be the (minimal) unitization,
so in this case $\KK+\CC 1_H$.
Then we have a short exact sequence of \csta s:
\[
\begin{tikzcd}
0 \arrow[r]
&\KK \arrow[r,hook]
&\wilde\KK \arrow[r,"q",twoheadrightarrow]
&\CC \arrow[r]
&0.
\end{tikzcd}
\]
Let $X$ be the $\wilde\KK-\CC$ correspondence given by the quotient map $q$ in the usual way.
We first show that $X$ is a split epimorphism.
Let $Y$ be the $\CC-\wilde\KK$ correspondence given by the inclusion map of the nondegenerate \cst-subalgebra $\CC$ of $\wilde\KK$.
Let
\[
\Phi:Y\od X\to \CC
\]
be the unique linear map associated with the bilinear map
\[
(y,\lambda)\mapsto q(y)\lambda.
\]
If we can verify that $\Phi$ preserves inner products and left $\CC$-module actions, then we will be able to conclude that it canonically determines an isomorphism
$Y\xt_{\wilde\KK}X\simeq \CC$ of $\CC-\CC$ correspondences:
\begin{align*}
\<\Phi(y\xt\lambda),\Phi(z\xt\mu)\>_\CC
&=\<q(y)\lambda,q(z)\mu\>_\CC
\\&=\bar\lambda\,\overline{q(y)}q(z)\mu
\\&=\bar\lambda q(y^*z)\mu
\\&=\<\lambda,\<y,z\>_{\wilde\KK}\cdot \mu\>_\CC
\\&=\<y\xt\lambda,z\xt\mu\>_\CC,
\end{align*}
and
of course $\Phi$ preserves left $\CC$-module actions
because $q$ is linear.

On the other hand, to see that $X$ is not a Hilbert bimodule notice that
$\wilde\KK$ does not have an ideal isomorphic to $\CC$,
because the only nontrivial proper ideal is $\KK$.
(Note that as a Hilbert module, $X$ is just the standard one determined by the \csta\ $\CC$, and so $\KK(X)=\CC$.)
\end{proof}

\begin{remark}\label{strong}
In \cite{arduini}, Arduini proposed a strengthening of the concepts of monomorphism and epimorphism,
with an eye toward improving the concept of subobject.
Here we only give his definition of monomorphism,
which nowadays is called
\emph{strong monomorphism}:
it
is a morphism $f:A\to B$ such that
for every commutative diagram
\[
\xymatrix{
C \ar[r]^-g \ar[d]_h
&D \ar[d]^k
\\
A \ar[r]_-f
&B,
}
\]
where $g$ is an epimorphism,
there is a unique morphism $\ell$
making the diagram
\[
\xymatrix{
C \ar[r]^-g \ar[d]_h
&D \ar[d]^k \ar@{-->}[dl]^\ell_{!}
\\
A \ar[r]_-f
&B
}
\]
commute.
Every split monomorphism is a strong monomorphism, and
every strong monomorphism is a monomorphism.
Thus, the uniqueness of $\ell$ is automatic.
Moreover, since $g$ is an epimorphism, it is enough to know that the upper triangle commutes.
As we mentioned in the introduction, for a time we thought we would be able to use Arduini's strong monomorphisms to define the image of a morphism in the enchilada category. But we had to abandon this approach, since we have an inadequate understanding of epimorphisms in the enchilada category.
\end{remark}

\end{document}